\begin{document}

\pagenumbering{arabic} \pagestyle{headings}

\title{On representable graphs, semi-transitive orientations, and the representation numbers}
\maketitle

\begin{center}
{\bf Magn\'us M. Halld\'orsson}\footnote{The work presented here was supported by grant no. 070009022 from the Icelandic Research Fund.}\\
\emph{School of Computer Science, Reykjavik University, Kringlan 1,
103 Reykjavik, Iceland}\\
{\tt mmh@ru.is}\\
\vskip 10pt
{\bf Sergey Kitaev\footnote{The work presented here was supported by grant no. 060005012/3 from the Icelandic Research Fund.}}\\
\emph{The Mathematics Institute, Reykjavik University, Kringlan 1,
103 Reykjavik, Iceland}\\
{\tt sergey@ru.is}\\
\vskip 10pt
{\bf Artem Pyatkin\footnote{The
work was partially supported by grants of the Russian Foundation for
Basic Research (projects codes 07-07-00022 and 08-01-00516)}}\\
\emph{Sobolev Institute of Mathematics, pr-t Koptyuga 4, 630090,
Novosibirsk, Russia}\\
{\tt artem@math.nsc.ru}\\
\end{center}

\section*{Abstract}

A graph $G=(V,E)$ is representable if there exists a word $W$ over
the alphabet $V$ such that letters $x$ and $y$ alternate in $W$ if
and only if $(x,y)\in E$ for each $x\neq y$. If $W$ is $k$-uniform
(each letter of $W$ occurs exactly $k$ times in it) then $G$ is
called $k$-representable. It was shown in~\cite{KP} that a graph is
representable if and only if it is $k$-representable for some $k$.
Minimum $k$ for which a representable graph $G$ is $k$-representable
is called its representation number.

In this paper we give a characterization of representable graphs in
terms of orientations. Namely, we show that a graph is representable if
and only if it admits an orientation into a so-called
\emph{semi-transitive digraph}.
This allows us to prove a number of results about representable graphs,
not the least that 3-colorable graphs are representable.
We also prove that the representation number of a graph on
$n$ nodes is at most $n$, from which one concludes that the
recognition problem for representable graphs is in NP.
This bound is tight up to a constant factor, as we present a graph
whose representation number is $n/2$.

We also answer several questions posed in~\cite{KP}, in particular,
on representability of the Petersen graph and local permutation
representability.



{\bf Keywords:} graph, representation, words, orientations,
complexity, circle graph, 3-colorable graph, comparability graph,
Petersen graph, representation number, semi-transitive orientation


\newcommand{\psdrawa}[3]{\begin{array}{c} \hspace{-1mm}
\raisebox{-4pt}{\psfig{figure=#1.ps,width=#2,height=#3}}
\hspace{0pt}\end{array}}

\newtheorem{prop}{Proposition}
\newtheorem{lemma}[prop]{Lemma}
\newtheorem{obs}[prop]{Observation}
\newtheorem{cor}[prop]{Corollary}
\newtheorem{theorem}[prop]{Theorem}
\newtheorem{problem}{Problem}
\newtheorem{conj}{Conjecture}
\theoremstyle{definition}
\newtheorem{defin}[prop]{Definition}
\newtheorem{claim}[prop]{Claim}
\newtheorem{ex}{Example}
\newtheorem{remark}[prop]{Remark}

\def\R{\mathcal{R}}
\newcommand\p{\circle*{0.3}}

\section{Introduction}
A graph $G=(V,E)$ is {\em representable} if there exists a word $W$
over the alphabet $V$ such that letters $x$ and $y$ alternate in $W$
if and only if $(x,y)\in E$ for each $x\neq y$. It is
$k$-representable if each letter appears exactly $k$ times. The
notion of representable (directed) graphs was introduced
in~\cite{KS} to obtain asymptotic bounds on the {\em free spectrum}
of the widely-studied {\em Perkins semigroup} which has played
central role in semigroup theory since 1960, particularly as a
source of examples and counterexamples. In \cite{KP}, the only paper
solely dedicated to the study of representable graphs, numerous
properties of representable graphs are derived and numerous types of
representable and non-representable graphs are pinpointed. Still,
large gaps of knowledge of these graphs have remained, and the
purpose of this paper is to address them.

We address the three most fundamental issues about representable graphs:
\begin{itemize}
  \item Are there alternative representations of these graphs that
aid in reasoning about their properties?
 \item Which types of graphs are representable and which ones are not? And,
 \item How large words can be needed to represent representable graphs?
\end{itemize}
These can be viewed as some of the most basic questions of any graph
class. We make progress on each of these.

We characterize representable graphs in terms of orientability.
The edges can be directed so as to yield a directed graphs satisfying a
property that we call \emph{semi-transitivity}.
It properly generalizes the transitivity property of comparability
graphs, constraining the subgraphs induced by certain types of cycles.
The definition and the characterization is given in Section 3.
This formulation allows us to reason fairly easily about the types of
graphs that are representable.

We show that the class of representable graphs captures quite involved
properties. In particular, all 3-colorable graphs are representable.
This resolves a conjecture of \cite{KP} regarding the Petersen graph,
showing that it is representable. We actually give an explicit
construction to show that it is 3-representable. The result also
properly captures all the previously known classes of representable
graphs: outerplanar, prisms, and comparability graphs.
On the negative side, we answer an open question of \cite{KP} by
presenting a non-representable graphs all of whose induced
neighborhoods are comparability graphs.

Finally, we show that any representable graph is $n$-representable, again
utilizing the semi-transitive orientability. Previously, no non-trivial
upper bound was known on the representation number, which is the
smallest value $k$ such that the given graph is $k$-representable.
This result implies that problem of deciding whether a given graph is
representable is contained in NP.
This bound on the representation number is tight up to a constant
factor, as we construct graphs with representation number $n/2$.
We also show that deciding if a representable graph is
$k$-representable is NP-complete for $3 \le k \le \lceil n/2
\rceil$, while the class of circle graphs coincides with the class
of graphs with representation number at most 2.

The paper is organized as follows. In Section~\ref{def} we give
definitions of objects of interest and review some of the known
results.
In Section~\ref{char} we give a characterization of representable
graphs in terms of orientations and discuss some important
corollaries of this fact. In Section~\ref{nice} we consider the
problems concerning the representation numbers, and show that it is
always at most $n$ but can be as much as $n/2$. We explore in
Section~\ref{classif} which classes of graphs are representable,
showing, in particular, 3-colorable graphs to be representable, but
numerous others to be orthogonal to representability.
Finally, we conclude with a discussion of
algorithmic complexity and some open problems in Section~\ref{open}.

\section{Definitions, notation, and known results}\label{def}

In this section we follow~\cite{KP} to define the objects of
interest.

Let $W$ be a finite word over an alphabet $\{x_1, x_2, \ldots \}$.
If $W$ involves the letters $x_1, x_2, \ldots, x_n$ then  we write
$Var(W) = \{x_1, \ldots, x_n\}$. Let $X$ be a subset of $Var(W)$.
Then $W\setminus X$ is the word obtained by eliminating all letters
in $X$ from $W$. A word is $k$-\emph{uniform} if each letter appears
in it exactly $k$ times. A 1-uniform word is also called a {\it
permutation}. Denote by $W_1W_2$ the {\em concatenation} of words
$W_1$ and $W_2$. We say that the letters $x_i$ and $x_j$ {\it
alternate} in $W$ if the word induced by these two letters contains
neither $x_ix_i$ nor $x_jx_j$ as a factor. If a word $W$ contains
$k$ copies of a letter $x$ then we denote these $k$ appearances of
$x$ by $x^1,x^2,\ldots ,x^k$. We write $x_i^j<x_k^l$ if $x_i^j$
stays in $W$ before $x_k^l$, i.~e., $x_i^j$ is to the left of
$x_k^l$ in $W$.

Let $G=(V,E)$ be a graph with the vertex set $V$ and the edge
set~$E$. We say that a word $W$ \emph{represents} the graph $G$ if
there is a bijection $\phi:Var(W)\rightarrow V$ such that
$(\phi(x_i),\phi(x_j))\in E$ if and only if $x_i$ and $x_j$
alternate in $W$. It is convenient to identify the vertices of a
representable graph and the corresponding letters of a word
representing it. We call a graph $G$ \emph{representable} if there
exists a word $W$ that represents $G$.
If $G$ can be represented by a
$k$-uniform word, then we say that $G$ is \emph{$k$-representable}.
Clearly, the complete graphs are the only examples of
1-representable graphs. So, in what follows we assume that $k\ge 2$.
Let the
\emph{representation number} of a graph $G$ be the minimum $k$ such that
$G$ is $k$-representable.


We call a graph \emph{permutationally representable} if it can be
represented by a word of the form $P_1P_2\ldots P_k$ where all $P_i$
are permutations. In particular, all permutationally representable
graphs are $k$-representable.

A digraph (directed graph) is {\em transitive} if the adjacency relation is transitive, i.~e. for every vertices $x,y,z\in V$ the existence of the arcs $xy,yz\in E$ yields that
$xz\in E$. A {\em comparability graph} is an undirected graph having an
orientation of the edges that yields a transitive digraph.


The following results on representable graphs are known. They were proved
in~\cite{KP} except for the Lemma~\ref{l1} that was proved in \cite{KS}.

\begin{prop}\label{pr1}
Let $W=AB$ be a $k$-uniform word representing a graph $G$.
Then the word $W'=BA$ also $k$-represents $G$.
\end{prop}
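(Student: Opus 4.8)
The plan is to show that the "rotation" of a $k$-uniform representing word by a full block leaves the set of alternating pairs unchanged. Write $W = AB$ where $A$ and $B$ are (possibly empty) subwords, and for a pair of distinct letters $x, y$ let $W_{xy}$ denote the subword of $W$ induced by the two letters $x$ and $y$. Since $W$ is $k$-uniform, $W_{xy}$ is a word of length $2k$ on two symbols; the pair alternates in $W$ precisely when $W_{xy}$ is one of the two perfectly alternating words $xyxy\cdots$ or $yxyx\cdots$ of length $2k$. The key observation is simply that inducing on $\{x,y\}$ commutes with concatenation: $W'_{xy} = (BA)_{xy} = B_{xy}A_{xy}$, whereas $W_{xy} = A_{xy}B_{xy}$. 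So the task reduces to the elementary combinatorial fact that a word on two symbols $\{x,y\}$ in which each symbol occurs exactly $k$ times is perfectly alternating if and only if every cyclic rotation of it (in particular $B_{xy}A_{xy}$) is perfectly alternating.

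The main steps, then, are: (i) record that $W'$ is again $k$-uniform and has the same variable set as $W$, so it is a candidate to represent $G$ under the same bijection $\phi$; (ii) for each pair $x \neq y$, note $W_{xy} = A_{xy}B_{xy}$ and $W'_{xy} = B_{xy}A_{xy}$; (iii) prove the two-symbol lemma: among all words on $\{x,y\}$ with $k$ copies of each, exactly two are alternating, namely $(xy)^k$ and $(yx)^k$, and these two are exactly the cyclic shifts of one another, so cyclically rotating an alternating such word yields an alternating such word and rotating a non-alternating one yields a non-alternating one; (iv) conclude that $x,y$ alternate in $W$ iff they alternate in $W'$, hence $W'$ represents $G$.

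For step (iii), the cleanest argument is: if $A_{xy}B_{xy}$ is alternating and nonempty, then since there are equally many $x$'s and $y$'s, the word begins and ends with different symbols, so $A_{xy}$ ends with the symbol opposite to the one $B_{xy}$ begins with — wait, more carefully, one checks that an alternating word of even length starts and ends with different letters, and that rotating by the length of $A_{xy}$ just re-splits $(xy)^k$ or $(yx)^k$, which remains of the form $(xy)^k$ or $(yx)^k$. Conversely, the same applied to $W' = BA$ and the split into $B, A$ recovers $W$, giving the reverse implication (or one simply notes that rotation is invertible). Edge cases: if $A$ or $B$ is empty the claim is trivial; if a pair has $W_{xy}$ containing a repeated factor $xx$ or $yy$, the rotation by a full symbol-block cannot create or destroy all such repeats simultaneously — this is precisely what the two-symbol lemma rules out.

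I do not expect a serious obstacle here; the statement is essentially the observation that representability is a property of the induced two-letter subwords up to cyclic rotation, and that two-symbol $k$-uniform alternating words form a single rotation class of size two. The only point requiring a moment's care is making the two-symbol lemma precise — specifically, that it is the uniformity (equal multiplicities) that forces an alternating word to have the same letter-count structure after a block rotation; without uniformity the analogous statement fails. So the "hard part," such as it is, is just stating and verifying that elementary lemma cleanly; everything else is bookkeeping with the bijection $\phi$.
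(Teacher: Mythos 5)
Your proof is correct: the paper itself states Proposition~\ref{pr1} without proof (it is quoted from~\cite{KP}), and your reduction to the two-letter induced subwords, together with the observation that a $k$-uniform alternating pair induces $(xy)^k$ or $(yx)^k$ and that cyclic rotation preserves exactly this form (with invertibility giving the converse), is precisely the standard argument behind that result. The only blemish is the hedging in step (iii), which you do resolve correctly; note that it is indeed the uniformity hypothesis that makes the lemma true, as your $xyx$-type example implicitly shows.
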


\begin{prop}\label{pr1a}
Let the graphs $G_1$ and $G_2$ be $k$-representable and $x\in V(G_1), y\in V(G_2)$. Assume that the graph $G$ is obtained
from $G_1$ and $G_2$ by identifying the
vertices $x$ and $y$ into a new vertex~$z$. Then $G$ is also $k$-representable.
\end{prop}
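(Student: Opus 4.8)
The plan is to combine the two given $k$-uniform words by gluing them at the common vertex, using Proposition~\ref{pr1} to move the shared letter into the needed position. Let $W_1$ be a $k$-uniform word representing $G_1$ (with $x\in Var(W_1)$) and let $W_2$ be a $k$-uniform word representing $G_2$ (with $y\in Var(W_2)$). First I would apply Proposition~\ref{pr1} to $W_1$: writing $W_1 = A x^k B$ where $x^k$ is the last occurrence of $x$ in $W_1$, I cannot immediately cycle $x^k$ to the front, but I can instead arrange by a cyclic shift that $W_1$ begins right after the last $x$, i.e.\ put $W_1$ in the form $W_1' = B A$ where now all the letters of $B$ (which are not $x$, if $B$ contains no $x$) come before the block of $x$'s. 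A cleaner way: by Proposition~\ref{pr1} we may assume $W_1 = x C_1$ for some word $C_1$, since we can cyclically rotate so that an occurrence of $x$ is first; more carefully, we want \emph{all} $k$ occurrences of $x$ consolidated at one end, which is not automatic, so the real construction is the interleaving described below.

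The key construction is as follows. Assume (after cyclic rotation via Proposition~\ref{pr1}) that $W_2$ ends with an occurrence of $y$, say $W_2 = D_2 y$, and that $W_1$ begins with an occurrence of $x$, say $W_1 = x D_1$. Now build the word
\[
W = x^1 \, (W_2 \setminus \{y\}\text{-segment before first }y) \; \cdots
\]
This is getting complicated; the honest approach is an inductive interleaving. Here is the construction I would carry out: write $W_1 = x U_1 x U_2 x \cdots x U_k$ and $W_2 = V_1 y V_2 y \cdots V_k y$, where the $U_i, V_i$ are (possibly empty) words not containing $x$ or $y$ respectively, and I have used Proposition~\ref{pr1} to place an $x$ first in $W_1$ and a $y$ last in $W_2$. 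Then set
\[
W \;=\; V_1 \, z \, U_1 \, V_2 \, z \, U_2 \, \cdots \, V_k \, z \, U_k,
\]
where $z$ is the new vertex. This $W$ is $k$-uniform: each letter of $G_1$ other than $z$ appears $k$ times (all inside the $U_i$'s exactly as in $W_1$), each letter of $G_2$ other than $z$ appears $k$ times (all inside the $V_i$'s exactly as in $W_2$), and $z$ appears exactly $k$ times.

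It remains to verify that $W$ represents $G$. I would check three cases for a pair of letters. (1) Both letters lie in $Var(W_1)$. Deleting all letters of $Var(W_2)\setminus\{z\}$ from $W$ yields exactly $x U_1 x U_2 \cdots x U_k$ with $x$ renamed $z$, i.e.\ $W\setminus(V(G_2)\setminus\{z\}) = W_1$ up to renaming; so the alternation relation among these letters is exactly that of $W_1$, hence matches $G_1$. (2) Both letters lie in $Var(W_2)$: symmetrically, deleting $V(G_1)\setminus\{z\}$ gives $V_1 z V_2 z \cdots V_k z = W_2$ up to renaming $y\mapsto z$, so alternation matches $G_2$. (3) One letter $a$ is in $V(G_1)\setminus\{z\}$ and the other $b$ is in $V(G_2)\setminus\{z\}$; these should be non-adjacent in $G$. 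Here $a$ occurs only inside the blocks $U_1,\dots,U_k$ and $b$ only inside $V_1,\dots,V_k$, and in $W$ these blocks strictly alternate ($V_1$ before $U_1$ before $V_2$ before $U_2$ before $\cdots$) with at least one block (indeed the letter $z$) of the other type between consecutive same-type blocks --- so the word induced by $\{a,b\}$ is a subword of a word in which all $b$'s precede-or-interleave but one can exhibit $bb$ or $aa$ as a factor whenever $a$ appears in two different $U_i$'s or $b$ in two different $V_j$'s, and if each appears in only one block one checks directly that $a$ and $b$ fail to alternate because there is no $b$ between the unique clump of $a$'s and... The main obstacle, and the step needing care, is precisely this cross-case: one must argue that any $a\in V(G_1)\setminus\{z\}$ and $b\in V(G_2)\setminus\{z\}$ do \emph{not} alternate in $W$. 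I expect this to follow because $a$ alternates with $z$ in $W$ iff $(a,z')\in E(G_1)$ where $z'=x$, and likewise for $b$; by picking the occurrences of $a$ (resp.\ $b$) relative to the $z$'s, the strict block-alternation $V_i z U_i$ forces an $aa$ or $bb$ factor, or else forces $a$ and $b$ to be separated by a $z$ in a way that still breaks alternation. I would handle it by first proving the clean lemma that $W\setminus(V(G)\setminus\{a,b,z\})$ has $z$ between every $a$-clump and every $b$-clump, and then observing that two letters separated this way cannot alternate unless one of them equals $z$.
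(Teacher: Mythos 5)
Your construction does not work, and the failure is exactly at the cross-case (3) that you left unresolved. In $W=V_1\,z\,U_1\,V_2\,z\,U_2\cdots V_k\,z\,U_k$, take any neighbour $a$ of $x$ in $G_1$ and any neighbour $b$ of $y$ in $G_2$ (the generic situation). Since $W_1=xU_1xU_2\cdots xU_k$ starts with $x$ and $a$ alternates with $x$, the $\{a,x\}$-restriction of $W_1$ is $(xa)^k$, so $a$ occurs exactly once in each $U_i$; since $W_2=V_1yV_2y\cdots V_ky$ ends with $y$ and $b$ alternates with $y$, $b$ occurs exactly once in each $V_i$. Hence the $\{a,b\}$-restriction of your $W$ is $(ba)^k$: the letters $a$ and $b$ alternate, so $W$ imposes the edge $ab$, which is absent from $G$. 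The smallest instance already breaks: $G_1=G_2=K_2$ with $W_1=(xa)^k$ and $W_2=(by)^k$ gives $W=(bza)^k$, which represents $K_3$ instead of the path $a$--$z$--$b$. The ``clean lemma'' you hoped to prove (a $z$ between every $a$-clump and every $b$-clump prevents alternation) is false for the same reason: whether $a$ and $b$ alternate depends only on the $\{a,b\}$-restriction of $W$, so interposed $z$'s are irrelevant. Since your word itself represents the wrong graph, no further argument in case (3) can rescue it; the construction must be changed. (Note also that the present paper only quotes this proposition from~\cite{KP} without proof, so there is no in-paper argument to compare with; the verdict concerns your construction on its own terms.)

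The shuffle idea can be repaired by offsetting the blocks by one position, e.g.
\[
W \;=\; V_1\, z\, V_2 U_1\, z\, V_3 U_2\, z\cdots z\, V_k U_{k-1}\, z\, U_k .
\]
Deleting the $U$-letters still yields $V_1 z V_2 z\cdots V_k z$, i.e.\ $W_2$ with $y$ renamed $z$, and deleting the $V$-letters yields $zU_1zU_2\cdots zU_k$, i.e.\ $W_1$ with $x$ renamed $z$, so your cases (1) and (2) go through verbatim. But now, for any $a\in V(G_1)\setminus\{z\}$ and $b\in V(G_2)\setminus\{z\}$, the $\{a,b\}$-restriction has the form $b^{\beta_1+\beta_2}a^{\alpha_1}b^{\beta_3}a^{\alpha_2}\cdots b^{\beta_k}a^{\alpha_{k-1}+\alpha_k}$ (where $\alpha_i,\beta_i$ count occurrences in $U_i,V_i$): the $k$ occurrences of $b$ fall into at most $k-1$ maximal runs, so for $k\ge 2$ (as the paper assumes) a factor $bb$ appears and $a,b$ never alternate, for every distribution of occurrences, not just the dangerous one above. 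Some such asymmetric alignment of the two words at the merged letter is the missing idea in your proposal.
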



\begin{lemma}\label{l1}
A graph is permutationally representable if and only if it is a
comparability graph. In particular, all bipartite graphs are
permutationally representable.
\end{lemma}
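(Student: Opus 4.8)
The plan is to prove both directions by relating permutations (1-uniform words) to linear extensions of a poset. First suppose $G=(V,E)$ is permutationally representable, say by $W=P_1P_2\cdots P_k$ where each $P_i$ is a permutation of $V$. I would interpret each $P_i$ as a linear order $\prec_i$ on $V$ (reading left to right), and then observe that two vertices $x,y$ alternate in $W$ precisely when they appear in the same relative order in every block $P_i$ — indeed, if the order of $x$ and $y$ agrees across all blocks then the induced subword is $xyxy\cdots$ or $yxyx\cdots$, whereas a single disagreement between consecutive blocks produces an $xx$ or $yy$ factor, and any disagreement can be localized to consecutive blocks. Hence $xy\in E$ iff $x,y$ are comparable the same way in all $\prec_i$. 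Now define a relation $R$ on $V$ by $x\mathrel{R} y$ iff $x\prec_i y$ for all $i$; this is the intersection of the linear orders $\prec_i$, hence a (strict) partial order. Orienting each edge $xy$ of $G$ from $x$ to $y$ when $x\mathrel{R}y$ gives an orientation, and transitivity of $R$ forces this orientation to be transitive: if $xy,yz\in E$ are oriented $x\to y$ and $y\to z$ then $x\mathrel{R}y\mathrel{R}z$ so $x\mathrel{R}z$, and since $x\mathrel{R}z$ witnesses that $x$ and $z$ agree in all blocks, $xz\in E$ and is oriented $x\to z$. Thus $G$ is a comparability graph.

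Conversely, suppose $G$ is a comparability graph, with a transitive orientation giving a strict partial order $P=(V,R)$ whose comparability graph is exactly $G$. By the standard fact that every finite poset is the intersection of its linear extensions (Dushnik–Miller), choose linear extensions $L_1,\dots,L_k$ of $P$ with $R=\bigcap_i L_i$. Let $P_i$ be the permutation of $V$ listing its elements in the order $L_i$, and set $W=P_1P_2\cdots P_k$. Then $x,y$ alternate in $W$ iff they appear in the same order in all $P_i$ iff $x$ and $y$ are comparable in $\bigcap_i L_i = R$ iff $x,y$ are comparable in $P$ iff $xy\in E$. So $W$ permutationally represents $G$.

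Finally, for the "in particular" statement: every bipartite graph is a comparability graph — orient every edge from one side of the bipartition to the other, which is vacuously transitive since there are no directed paths of length two — so by the equivalence just proved it is permutationally representable.

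The routine lemma underlying both directions — and the main thing to get right — is the combinatorial claim that in a concatenation of permutations $P_1\cdots P_k$, two letters alternate if and only if their relative order is the same in every block. The forward implication is immediate; for the reverse, the key observation is that if the orders disagree somewhere then they disagree between two consecutive blocks $P_i,P_{i+1}$, and there the induced subword contains $xx$ or $yy$. Everything else is an application of the Dushnik–Miller theorem that a poset equals the intersection of its linear extensions, which I would cite rather than reprove.
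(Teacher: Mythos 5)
Your proof is correct. Note that the paper does not actually prove Lemma~\ref{l1}; it quotes it as a known result from~\cite{KS}. Your argument is the standard one and is essentially the same reasoning the paper does spell out later for the refined statement in Lemma~\ref{obs:dimension} (permutational $k$-representability of a comparability graph versus poset dimension at most $k$): permutation blocks are read as linear orders, alternation of two letters is equivalent to their having the same relative order in every block, and the Dushnik--Miller realizer supplies the converse; the bipartite case follows by orienting all edges across the bipartition. So your proposal is a correct, self-contained proof of the cited lemma, consistent in method with what the paper does where it argues these matters explicitly.
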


%
For a vertex $x\in V(G)$ denote by $N(x)$ the set of all its
neighbors.

\begin{theorem}~\label{t1}
If $G$ is representable, then for every $x\in V(G)$ the graph
induced by $N(x)$ is permutationally representable.
\end{theorem}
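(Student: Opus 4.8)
The plan is to start from a word $W$ that represents $G$ and extract from it a permutational representation of the neighborhood $N(x)$. Since $W$ represents $G$, the letter $x$ and any letter $y$ alternate in $W$ precisely when $y \in N(x)$. The key observation is that the occurrences of $x$ chop $W$ into blocks, and the alternation condition with $x$ forces every vertex of $N(x)$ to behave very rigidly relative to these blocks. First I would apply Proposition~\ref{pr1} to cyclically rotate $W$ so that it begins with an occurrence of $x$; write $W = x A_1 x A_2 x \cdots x A_k D$, where $A_1,\dots,A_k$ are the infixes strictly between consecutive copies of $x$ and $D$ is the (possibly empty) tail after the last $x$. For $y \in N(x)$, the fact that $y$ alternates with $x$ means $y$ occurs exactly once in each of the gaps determined by the $x$'s in a consistent way — more precisely, restricting $W$ to $\{x,y\}$ gives a word with no $xx$ or $yy$ factor, so between any two consecutive $x$'s there is exactly one $y$, and there is no $y$ before the first $x$ or (after rotation) after the last $x$ except in a controlled manner.

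Next I would argue that, after possibly another rotation to also kill the tail $D$ (or by a short case analysis on whether $x$ is in the last block), each $y\in N(x)$ occurs exactly once in each block $A_1,\dots,A_k$ and nowhere else; in particular each $A_i$ restricted to the alphabet $N(x)$ is a permutation $P_i$ of $N(x)$. Then the central claim is that the word $P_1 P_2 \cdots P_k$ permutationally represents the graph induced by $N(x)$. This is where the real content lies: I must show that for $y,z \in N(x)$, the letters $y$ and $z$ alternate in $P_1P_2\cdots P_k$ if and only if they alternate in $W$. The ``if'' direction is immediate since $P_1\cdots P_k$ is just $W$ restricted to the alphabet $N(x)$, and alternation is preserved under taking subwords of the relevant pair. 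For the ``only if'' direction I would argue the contrapositive: if $y$ and $z$ do not alternate in $W$, then the word $W|_{\{y,z\}}$ contains $yy$ or $zz$ as a factor; I need to locate such a repeated factor already inside some single block $A_i$, so that it survives in $P_i$ and hence in $P_1\cdots P_k$.

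The main obstacle, then, is exactly this last point: a repeated factor $yy$ in $W|_{\{y,z\}}$ could a priori straddle a block boundary, i.e. the two $y$'s could lie in different blocks $A_i$ and $A_{i+1}$ with no $z$ between them. Here is where I would use the adjacency of $y$ and $z$ to $x$ crucially: between $A_i$ and $A_{i+1}$ sits a copy of $x$, and since $z$ alternates with $x$ there is exactly one $z$ in $A_{i+1}$ (and in every block). So if the ``first'' $y$ is the last-or-only $y$ of $A_i$ and the ``second'' $y$ is in $A_{i+1}$, I would compare their positions with the unique $z$ of $A_{i+1}$; a careful bookkeeping shows that either there is a $z$ between them after all (contradicting that $yy$ was a factor of $W|_{\{y,z\}}$), or the configuration forces a $zz$ or $yy$ factor genuinely inside a single block. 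Essentially, the uniformity of each $P_i$ as a permutation of $N(x)$ — which is itself a consequence of $y,z$ both being adjacent to $x$ — rules out straddling violations and confines every non-alternation to a single block. Once that combinatorial lemma is in hand, $P_1\cdots P_k$ is the desired permutational representation, and by Lemma~\ref{l1} the graph induced by $N(x)$ is a comparability graph, i.e. permutationally representable, completing the proof.
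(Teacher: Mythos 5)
Your overall strategy -- cut $W$ at the occurrences of $x$ and read off, from each block, a permutation of $N(x)$ -- is the right one (note the paper itself gives no proof of Theorem~\ref{t1}; it is quoted from~\cite{KP}). But as written there is a genuine gap at the start and at the tail. Proposition~\ref{pr1} only licenses cyclic rotation of a \emph{$k$-uniform} word; for a non-uniform representant a cyclic shift can change the represented graph (compare $xyy$ with $yxy$). So before rotating you must replace the given word by a uniform one, using the fact quoted in the introduction from~\cite{KP} that a representable graph is $k$-representable for some $k$; you never do this. The omission matters precisely at your tail $D$: if $W$ starts with $x$ but is not uniform, a neighbour of $x$ occurs exactly once in every gap between consecutive $x$'s but may or may not occur after the last $x$. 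If every block orders a pair as $yz$, while $z$ occurs in $D$ and $y$ does not, then $W|_{\{y,z\}}=(yz)^k z$, so $y,z$ are non-adjacent in $G$, yet your word $P_1\cdots P_k$ makes them alternate. Hence ``killing the tail by another rotation or a short case analysis'' is exactly the step where the argument can fail, and it is not carried out. The clean fix: take $W$ $k$-uniform and, by Proposition~\ref{pr1}, starting with $x$; then $W|_{\{x,y\}}=(xy)^k$ for every $y\in N(x)$, so the segment after the last $x$ is itself a block containing each neighbour exactly once, there is no leftover tail, and $W$ restricted to the alphabet $N(x)$ is literally $P_1P_2\cdots P_k$.

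Your second difficulty, the ``straddling'' of block boundaries, is illusory, and the ``careful bookkeeping'' you appeal to is never actually done. Alternation of $y$ and $z$ depends only on the word induced by these two letters, and once all occurrences of letters of $N(x)$ lie inside the blocks, $(P_1\cdots P_k)|_{\{y,z\}}$ \emph{is} $W|_{\{y,z\}}$: deleting the $x$'s and the other letters can neither create nor destroy a $yy$ or $zz$ factor of that two-letter word, regardless of whether the two equal letters sit in the same block or in consecutive ones. So both directions of the equivalence are immediate, and no lemma confining violations to a single block is needed (nor is one proved in your sketch). Finally, the closing appeal to Lemma~\ref{l1} is superfluous: once $P_1\cdots P_k$ represents the graph induced by $N(x)$, permutational representability is exactly what you have exhibited.
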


\begin{theorem}~\label{t2}
Outerplanar graphs are $2$-representable.
\end{theorem}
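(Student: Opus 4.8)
The plan is to show that every outerplanar graph can be represented by a $2$-uniform word, proceeding by induction on the structure of the graph. First I would reduce to the case of a $2$-connected outerplanar graph: by Proposition~\ref{pr1a}, a $k$-representable graph remains $k$-representable under identification of vertices across two $k$-representable pieces, so if $G$ is built by gluing blocks (maximal $2$-connected subgraphs and bridges) at cut vertices, it suffices to $2$-represent each block. A bridge (an edge) is trivially $2$-representable by the word $xy\,xy$, so the heart of the matter is the $2$-connected case.

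For a $2$-connected outerplanar graph $G$, the outer face is a Hamiltonian cycle $v_1 v_2 \cdots v_n v_1$, and all remaining edges are non-crossing chords of this cycle. The natural idea is to start from the permutation word $P = v_1 v_2 \cdots v_n$, whose square $PP = v_1 \cdots v_n v_1 \cdots v_n$ makes every pair of letters alternate and hence represents $K_n$, and then to ``break'' the alternation for exactly the non-edges. I would instead work the other way: order the vertices around the cycle and use a word of the form $A\,B$ where $A$ and $B$ are each permutations of $V$, chosen so that $x$ and $y$ alternate in $AB$ precisely when $xy$ is an edge. Using Proposition~\ref{pr1}, cyclic rotations of such a word are available, which matches the cyclic symmetry of the outer cycle. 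Concretely, I expect that one can take $A = v_1 v_2 \cdots v_n$ and let $B$ be a permutation obtained by reversing certain contiguous blocks determined by the nesting structure of the chords; two letters $v_i, v_j$ alternate in $A B$ iff their relative order is the same in $A$ and $B$ or ``interleaves'' in the right way, and the non-crossing (laminar) structure of outerplanar chords is exactly what lets a single block-reversal pattern realize all the desired non-adjacencies simultaneously.

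The cleanest route is probably induction on the number of vertices of a $2$-connected outerplanar graph $G$, peeling off an ``ear'' or a vertex of degree $2$ lying on the outer cycle. Such a vertex $v$ always exists; let its two neighbors on the outer cycle be $u$ and $w$. Deleting $v$ gives a smaller outerplanar graph $G'$ (after possibly adding the edge $uw$ if it is not already present — one must check this stays outerplanar, which it does since $uvw$ bounded an inner face), and by induction $G'$ has a $2$-uniform representing word $W'$. Then I would insert the two copies of $v$ into $W'$ at positions dictated by the occurrences of $u$ and $w$: placing $v^1 v^2$ so that $v$ alternates with exactly $u$ and $w$ and with no other letter. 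Showing that such an insertion point exists, and that it does not disturb the alternation status of any pair already present in $W'$, is the key local lemma.

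The main obstacle I anticipate is precisely that insertion step: guaranteeing a slot for the two copies of $v$ that simultaneously (i) creates alternation with $u$ and with $w$, (ii) avoids creating alternation with any third vertex, and (iii) leaves every other pair's alternation/non-alternation intact. Controlling (iii) is automatic since inserting a new letter never changes the subword on two old letters; the real work is (i) and (ii) together, which will require knowing that in $W'$ the occurrences of $u$ and $w$ can be found in a configuration (e.g. $\ldots u \ldots w \ldots u \ldots w \ldots$ or a nested variant) into which one can drop $v^1$ between the first $u$ and first $w$ and $v^2$ after the last of them, or some similar pattern, using the fact that $u$ and $w$ were adjacent (or made adjacent) in $G'$. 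I would isolate this as a separate lemma about $2$-uniform words and adjacent vertices, prove it by a short case analysis on the interleaving pattern of $u$ and $w$ in $W'$, and then the induction goes through.
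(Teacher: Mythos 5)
Your overall plan (reduce to $2$-connected blocks via Proposition~\ref{pr1a}, then induct by deleting a degree-$2$ vertex $v$ of the outer cycle and re-inserting its two copies into a word for $G'=G-v+uw$) is a reasonable skeleton, but the step you yourself identify as ``the key local lemma'' is where the proof genuinely breaks, and not only for the reasons you list. First, the detour through words of the form $AB$ with $A,B$ permutations cannot work even in principle: two letters alternate in $P_1P_2$ exactly when they occur in the same relative order in both permutations, so such words represent precisely permutation graphs (comparability graphs of $2$-dimensional posets), and already $C_5$ -- a $2$-connected outerplanar graph -- is not even a comparability graph. Second, and more seriously, the insertion lemma as you state it (``insert two copies of a fresh letter $v$ into an arbitrary $2$-uniform word $W'$ representing $G'$ so that $v$ alternates with exactly the adjacent pair $u,w$, proved by a case analysis on the interleaving pattern of $u$ and $w$'') is false: whether such an insertion exists depends on the placement of \emph{all} other letters, not just on the $u,w$-pattern. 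Viewing $W'$ as a chord diagram (as in Observation~\ref{2repr}), a new chord crossing both $u$ and $w$ must have its two endpoints in opposite arcs of the four arcs cut out by the endpoints of $u$ and $w$; if the diagram contains one chord joining the first pair of opposite arcs and another joining the second pair (so that $u,w,x,y$ form a suitably drawn $K_4$), every admissible placement of the new chord crosses a third letter. So the lemma cannot be proved for an arbitrary representant of $G'$; your induction would have to carry a much stronger invariant, namely a \emph{specific} structured representation of $G'$ (one in which the two occurrences of consecutive outer-cycle vertices remain suitably exposed), and producing and maintaining that invariant is exactly the missing content.

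For comparison: the present paper does not reprove Theorem~\ref{t2} (it is quoted from~\cite{KP}); the route most consonant with this paper is via Observation~\ref{2repr}, i.e.\ $2$-representability is equivalent to being a circle graph, and for an outerplanar graph one builds the chord diagram directly from an outerplanar embedding (replace each vertex on the outer face by a short chord crossing exactly the chords of its incident edges), rather than by inserting vertices one at a time into an unconstrained word. If you want to keep your inductive scheme, the fix is to make the inductive hypothesis assert the existence of such an embedding-derived word, at which point the insertion of the degree-$2$ vertex becomes a purely local modification and the obstruction above disappears.
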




\begin{theorem}~\label{t3}
For every graph $G$ there exists a $3$-representable graph $H$
that contains $G$ as a minor. In particular, a $3$-subdivision of
every graph $G$ is $3$-representable.
\end{theorem}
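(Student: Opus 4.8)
The goal is twofold: (1) for every graph $G$, produce a $3$-representable graph $H$ containing $G$ as a minor; (2) show specifically that the $3$-subdivision of $G$ (replace each edge by a path with three new internal vertices, so an edge becomes a path on five vertices) is $3$-representable. Since the $3$-subdivision of $G$ does contain $G$ as a minor (contract each subdivision path back to an edge), the second statement implies the first, so the plan is to focus on the $3$-subdivision.

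First I would set up a modular construction using Proposition~\ref{pr1a}: if each ``gadget'' is $3$-representable and gadgets are glued only at single identified vertices, the whole graph stays $3$-representable. So I would like to express the $3$-subdivision of $G$ as a union of small gadgets glued at vertices. The natural gadget is the path $P$ on five vertices $a - u - v - w - b$ corresponding to a single edge $(a,b)$ of $G$, where $a,b$ are original vertices and $u,v,w$ are the three new ones. Paths are outerplanar, hence $2$-representable by Theorem~\ref{t2}, and a $2$-representable graph is $3$-representable (pad: if $W$ is $2$-uniform representing $P$, one checks a suitable $3$-uniform word still works — or invoke the standard fact that $k$-representable implies $(k{+}1)$-representable). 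However, gluing these path-gadgets naively at the original vertices is problematic: an original vertex $a$ of degree $d$ in $G$ becomes the common endpoint of $d$ path-gadgets, and Proposition~\ref{pr1a} only lets us identify \emph{one} vertex from each of \emph{two} graphs at a time. Iterating it, we can glue many gadgets at a common vertex as long as the gluing tree structure is respected — identify gadget $1$ and gadget $2$ at $a$, then identify the result with gadget $3$ at $a$, etc. This works because Proposition~\ref{pr1a} is stated for identifying one vertex of $G_1$ with one vertex of $G_2$, and after the first identification $a$ is a single vertex of the combined graph. So the real content is: the $3$-subdivision of $G$ is obtained from disjoint copies of $P_5$'s (one per edge of $G$) by repeatedly identifying vertices, and the identification pattern is a ``tree-like'' amalgamation — each original vertex $a$ is where several $P_5$-gadgets meet, but no two gadgets share more than the one vertex $a$, and distinct original vertices lie in distinct gadgets-or-are-glued-independently.

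The main obstacle is checking that Proposition~\ref{pr1a} really does apply iteratively in this situation. The subtlety: when we glue the $k$-th gadget at vertex $a$, the vertex $a$ already belongs to a large graph $G'$ (the union of the first $k-1$ gadgets plus possibly other original vertices and their gadgets), and we must identify $a\in V(G')$ with the endpoint $a$ of a fresh $P_5$. Proposition~\ref{pr1a} permits exactly this: $G_1 = G'$ (which is $3$-representable by induction), $G_2 = P_5$ (which is $3$-representable), $x = a \in V(G_1)$, $y = a \in V(G_2)$. The output is $3$-representable. The only thing to verify is that the graph so assembled is genuinely the $3$-subdivision of $G$ and that we never need to glue two graphs along an edge or along two vertices simultaneously — this holds because in the $3$-subdivision, two distinct edge-paths of $G$ share a vertex iff the edges share an endpoint in $G$, and then they share exactly that one (original) vertex. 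So I would (i) fix an ordering of the edges of $G$, (ii) build up the $3$-subdivision one edge-path at a time, at each stage invoking Proposition~\ref{pr1a} with the second graph a $P_5$, and (iii) conclude by induction on $|E(G)|$ that the $3$-subdivision is $3$-representable, noting finally that it contains $G$ as a minor.

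A cleaner alternative worth mentioning, avoiding the bookkeeping: one can first note that a star $K_{1,m}$ is a tree, hence outerplanar, hence $2$-representable, and a subdivided star is too; then the $3$-subdivision of $G$ decomposes into subdivided-star gadgets centered at the original vertices, overlapping pairwise in single subdivision vertices, which is again a tree-like amalgamation handled by Proposition~\ref{pr1a}. Either way the engine is the same: Theorem~\ref{t2} supplies $2$-representability (hence $3$-representability) of each tree-shaped piece, and Proposition~\ref{pr1a} assembles the pieces without increasing the uniformity. The hard part is purely combinatorial — confirming that the overlap structure of the pieces is a single-vertex amalgamation at each step — rather than anything about words.
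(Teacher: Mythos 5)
Your plan fails at exactly the point you flag as ``purely combinatorial bookkeeping,'' and the failure is not repairable within the proposed framework. Iterated use of Proposition~\ref{pr1a} identifies, at each step, a \emph{single} vertex of the already-built graph with a \emph{single} vertex of the new gadget. A graph assembled this way is a tree-like amalgamation of the gadgets: every cycle of the result lies entirely inside one gadget. But the $3$-subdivision of any graph $G$ containing a cycle has cycles that pass through several of your $P_5$-gadgets. Take $G=K_3$ with vertices $a,b,c$: its $3$-subdivision is a $12$-cycle. After you have attached the gadgets for $(a,b)$ and $(b,c)$, the gadget for $(a,c)$ has \emph{both} of its endpoints $a$ and $c$ already present in the partial graph, so you would need to identify two vertices simultaneously, which Proposition~\ref{pr1a} does not permit; and no reordering of the edges of $G$ avoids this, since the last gadget of any cycle always has both endpoints already placed. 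Your observation that two edge-paths share at most one vertex is only a statement about \emph{pairwise} overlaps; it does not make the global gluing pattern tree-like. The ``subdivided star'' variant has the same defect: the star gadgets centered at the original vertices overlap according to the adjacency structure of $G$ itself, so any cycle of $G$ again forces a two-vertex identification. Thus your argument proves the theorem only for forests.

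The missing ingredient is a lemma of the form: if $H$ is $3$-representable and $x,y\in V(H)$, then the graph obtained from $H$ by joining $x$ to $y$ by a path through three new vertices is again $3$-representable. With such a lemma one adds the subdivided edges one at a time (starting, say, from the discrete graph on $V(G)$, which is trivially $3$-representable), and both endpoints being ``already present'' is no longer an obstacle. Proving that lemma is genuinely a word-manipulation argument --- one must exhibit how to insert the three new letters into a $3$-uniform word representing $H$ so that the new letters alternate correctly with $x$, $y$, each other, and nothing else --- and this is how the result is established in~\cite{KP} (the present paper only cites Theorem~\ref{t3} from there). So your closing claim that the proof requires ``nothing about words'' is precisely where the real content of the theorem has been skipped.
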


\begin{prop}\label{pr4}
All prisms are $3$-representable. Moreover, the triangular prism has
the representation number~3.
 \end{prop}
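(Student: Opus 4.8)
The plan is to prove the two assertions separately. Write the vertex set of the $n$-prism as $\{a_1,\dots,a_n\}\cup\{b_1,\dots,b_n\}$, where $a_1\cdots a_na_1$ and $b_1\cdots b_nb_1$ are the two $n$-cycles and $a_1b_1,\dots,a_nb_n$ are the rungs. For the first assertion I would exhibit, for every $n$, a $3$-uniform word $W_n$ over this alphabet representing the prism. The word is built in three ``layers'': the $3n$ occurrences of the $a_i$ among themselves realize the outer $n$-cycle, the $3n$ occurrences of the $b_i$ realize the inner $n$-cycle, and the interleaving of the two groups is arranged so that $a_i$ and $b_i$ alternate while $a_i$ and $b_j$ do not for $i\neq j$. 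For $n=3$ (both cycles triangles) one such word is
\[
W_3 \;=\; b_1\,a_1\,b_2\,b_3\,b_1\,a_2\,b_2\,a_3\,b_3\,a_1\,a_2\,a_3\,b_1\,a_1\,b_2\,a_2\,b_3\,a_3 ,
\]
and for general $n$ the word has an analogous shape. Once the word is written down, the verification that the alternating pairs are exactly the edges of the prism is a direct check on the six possible types of pair ($a_i,a_{i\pm1}$; $b_i,b_{i\pm1}$; $a_i,b_i$; and the three kinds of non-edges). Guessing the correct general pattern is the only real work here; the verification is mechanical.

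For the second assertion, since $W_3$ already shows the triangular prism to be $3$-representable and it is clearly not $1$-representable (it is not complete), it remains to prove that it is not $2$-representable. Suppose $W$ were a $2$-uniform word representing it. I would first record a rigidity fact: if a $2$-uniform word on some letter set has all of its pairs alternating, then it equals $\sigma\sigma$ for a permutation $\sigma$ of that set (order the letters by first occurrence; the ``$xyxy$ or $yxyx$'' dichotomy forces the second occurrences to appear in the same order and forces all first occurrences to precede all second occurrences). Applying this to $\{a_1,a_2,a_3\}$ and to $\{b_1,b_2,b_3\}$ (each a triangle in the prism), the restriction of $W$ to the $a$'s is $\sigma\sigma$ and to the $b$'s is $\tau\tau$; using the automorphisms of the prism that permute $\{1,2,3\}$ simultaneously on the two triangles we may take $\sigma=a_1a_2a_3$. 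Thus $W$ is a shuffle of $a_1a_2a_3a_1a_2a_3$ with a word of the form $\tau\tau$ on the $b$'s. Whether $b_j$ alternates with $a_i$ now depends only on which of the seven gaps determined by the six occurrences $a_1^1,a_2^1,a_3^1,a_1^2,a_2^2,a_3^2$ contain the two occurrences of $b_j$. I would write out these gap conditions for the three rungs $a_ib_i\in E$ and the six non-edges $a_ib_j\notin E$ ($i\neq j$), combine them with the requirement that $b_1,b_2,b_3$ pairwise alternate (equivalently, their three gap-pairs pairwise interleave), and check that no placement of the $b$-occurrences satisfies all constraints at once; hence no such $W$ exists.

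The main obstacle in the first assertion is producing the explicit word $W_n$ for all $n$ with the correct simultaneous realization of the two cyclic patterns: the prism is $3$-connected, so (unlike graphs with a cut vertex) it cannot be assembled from smaller representable graphs by vertex identification (Proposition~\ref{pr1a}), and $k$-representability being closed only under induced subgraphs gives no shortcut either, so a direct construction is unavoidable. In the second assertion the case analysis is finite and small — after normalising $\sigma$ there are only a handful of essentially different configurations of the $b$-occurrences to rule out — so the only care needed is to organise the enumeration of gap-positions so that the contradiction is transparent in each case.
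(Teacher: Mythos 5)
Two preliminary remarks. First, the paper itself contains no proof of this proposition: it is quoted in Section~2 from~\cite{KP}, and the paper's own machinery (semi-transitive orientations, Theorem~\ref{obs:3col}) only gives that prisms are \emph{representable}, with no control on the representation number; so your argument can only be judged on its own. Second, the triangular-prism half of your argument is sound. Your word $W_3$ is indeed $3$-uniform and its alternating pairs are exactly the nine edges of the triangular prism (I checked all fifteen pairs), and your plan for excluding a $2$-uniform representant works: the rigidity fact (a $2$-uniform word in which all pairs alternate has the form $\sigma\sigma$) is correct, restricting $W$ to each triangle is legitimate, and after normalizing $W$ restricted to the $a$'s to $a_1a_2a_3a_1a_2a_3$ the gap analysis does close --- in gap coordinates $0,\dots,6$ the placements compatible with the rung/non-rung constraints are $b_1\in\{(0,1),(1,6),(3,4)\}$, $b_2\in\{(1,2),(4,5)\}$, $b_3\in\{(2,3),(0,5),(5,6)\}$, and no combination makes the three $b$-intervals pairwise crossing. (One caution: alternation of $b_i$ with $b_j$ is not determined by gap indices alone when occurrences share a gap, so the final check should be phrased via crossing intervals with within-gap orderings allowed; this does not change the outcome.)

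The genuine gap is in the first assertion. ``All prisms are $3$-representable'' requires a construction, or at least an argument, for every $n$, and you give none: ``for general $n$ the word has an analogous shape'' is precisely the step you yourself call ``the only real work,'' and it is not carried out. Moreover, the $n=3$ word does not visibly suggest the general pattern: in your first block the $b$-letters occur as $b_1b_2b_3b_1b_2b_3$, a $2$-uniform word for the \emph{complete} graph on the $b$'s, which a triangle happens to be; for $n\ge 4$ the inner cycle $C_n$ is not complete, so that block would have to be replaced by a $2$-uniform word representing the cycle, and the interaction of its gaps with the interleaved $a$'s (both the rungs $a_ib_i$ and the many non-edges $a_ib_j$) would have to be re-verified from scratch. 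Until an explicit $W_n$ is exhibited and checked, or some other argument is supplied, your proof establishes the proposition only for $n=3$; the case $n\ge 4$, which is the bulk of the first assertion, remains open in your write-up.
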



Theorem~\ref{t1} provides an easy way to construct non-representable
graphs: just take a graph that is not a comparability graph and add
an all-adjacent vertex to it. The wheel $W_5$ is the smallest
non-representable graph. Some other small non-representative graphs
are given in Fig.~1 (this figure appears in~\cite{KP}).

\begin{figure}[ht]
\begin{center}
\input{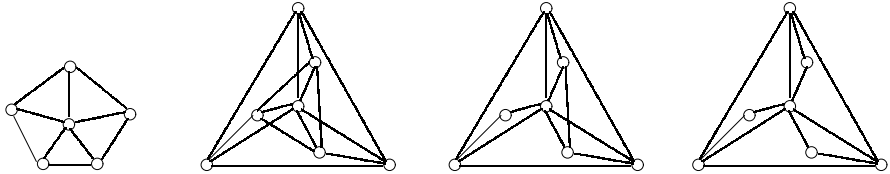}
\vspace{-20mm}\caption{Small non-representable graphs}\label{small}
\end{center}
\end{figure}

Paper~\cite{KP} contains several open problems. In this paper we
solve some of them.


\section{Characterization of Representable Graphs by Orientability}\label{char}

The word representation of representable graphs is simple and natural.
Yet it does not lend to easy arguments for the characteristic of
representable graphs. Non-representability is even harder to argue in
terms of the many possible corresponding words.
The main result of this section is a new characterization of
representable graphs that leads easily to various results about
representability.

We give a characterization in terms of orientability, which implies
that representability corresponds to a property of a digraph
obtained by directing the edges in certain way. Recall that
Lemma~\ref{l1} states that a graph is permutationally representable
if and only if it has a transitive orientation. We prove a similar
fact on representable graphs, namely, that a graph is representable
if and only if it has a so-called {\em semi-transitive orientation}.
Our definition, in fact, generalizes
that of a transitive orientation.

Other orientations have been defined in order to capture
generalizations of comparability graphs. As transitive orientations
form constraints on the orderings of induced $P_3$, these
generalizations form constraints on the orderings of induced $P_4$.
These include {\em perfectly orderable graphs} (and its subclasses)
and {\em opposition graphs}~\cite{classes}.
Classes such as {\em chordal graphs} are defined in terms of
vertex-orderings, and imply therefore indirectly acyclic
orientations. None of these properties captures our definition
below, nor does our characterization subsume any of them.

We turn to the characterization and start with definitions of
certain directed graphs. A \emph{semi-cycle} is the directed acyclic
graph obtained by reversing the direction of one arc of a directed
cycle. An acyclic digraph is a \emph{shortcut} if it is induced by
the vertices of a semi-cycle and contains a pair of non-adjacent
vertices. Thus, a digraph on the vertex set $\{ v_0, v_1, \ldots,
v_t\}$, is a shortcut if it contains a directed path $v_0v_1\ldots
v_t$, the arc $v_0v_t$ and it is missing the arcs $v_iv_j$ for $0
\le i < j \le t$ (in particular, $t \ge 3$).

\begin{defin}
A digraph is \emph{semi-transitive} if it is acyclic and contains no
shortcuts.
\end{defin}

A graph is \emph{semi-transitively orientable} if there exists an
orientation of the edges that results in a semi-transitive graph.

Our main result in this paper is the following.

\begin{theorem}
A graph is representable if and only if it is semi-transitively orientable.
\label{thm:rep-equals-semi-trans}
\end{theorem}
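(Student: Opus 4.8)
The plan is to prove both directions of the equivalence by explicit constructions. For the easier direction, suppose $G$ is representable, say by a $k$-uniform word $W$ (we may assume uniformity by the result of \cite{KP} cited in the abstract). I would orient each edge $xy$ of $G$ by declaring $x\to y$ whenever the first occurrence $x^1$ precedes the first occurrence $y^1$ in $W$. The first task is to check this orientation is acyclic: along a directed path $v_0\to v_1\to\cdots\to v_t$, the first occurrences satisfy $v_0^1<v_1^1<\cdots<v_t^1$, so there can be no directed cycle. The real content is showing there are no shortcuts. Suppose $v_0v_1\ldots v_t$ is a directed path with the extra arc $v_0\to v_t$ but some arc $v_iv_j$ ($0\le i<j\le t$, not both endpoints of the long arc or consecutive) is missing, i.e. $v_i$ and $v_j$ do \emph{not} alternate in $W$. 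I would analyze the relative positions of the occurrences of $v_0,\ldots,v_t$ forced by the alternation conditions along the path and on the arc $v_0v_t$, and derive that $v_i$ and $v_j$ \emph{must} alternate — a contradiction. Concretely, the alternation of consecutive $v_\ell,v_{\ell+1}$ interleaves their occurrences tightly, and alternation of $v_0$ with $v_t$ pins down the global pattern enough to force alternation of every intermediate pair; this is the combinatorial heart of this direction.

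For the converse, suppose $G$ has a semi-transitive orientation, giving an acyclic digraph $D$ with no shortcuts. I would build a word representing $G$ by induction on the vertices taken in a topological order $v_1,v_2,\ldots,v_n$ of $D$, maintaining the invariant that after processing $v_1,\ldots,v_m$ we have a (uniform, after padding) word $W_m$ representing the induced subdigraph $D[\{v_1,\ldots,v_m\}]$. To add $v_{m+1}$: its in-neighbors among $v_1,\ldots,v_m$ must alternate with the new letter and its non-neighbors must not. The construction would insert copies of $v_{m+1}$ into $W_m$ at positions determined by the in-neighbors of $v_{m+1}$ — roughly, place a copy of $v_{m+1}$ just after each "block" associated with an in-neighbor — and then the absence of shortcuts is exactly what guarantees that this insertion does not accidentally create or destroy alternations among the already-placed letters, nor create an alternation between $v_{m+1}$ and a non-neighbor. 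One then uniformizes the word (a $k$-representable graph is $(k+1)$-representable by padding, an "evident fact" noted in the excerpt), so a single $k$ works for all vertices.

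The main obstacle is the converse direction: controlling the insertion so that, simultaneously, (i) every in-neighbor of $v_{m+1}$ ends up alternating with it, (ii) every non-neighbor does not, and (iii) no previously correct pair gets disturbed. The shortcut-freeness must be leveraged precisely here: if inserting $v_{m+1}$ forced a bad alternation, one should be able to trace a directed path plus chord witnessing a shortcut in $D$. I would isolate this as a lemma ("given a word $W_m$ representing $D_m$ semi-transitively and a sink-extension $v_{m+1}$, there is a valid insertion"), prove it by carefully choosing insertion points relative to the last occurrences of in-neighbors and the first occurrences of their out-neighbors, and handle the bookkeeping of uniform length separately at the end. The acyclicity hypothesis is what makes the topological-order induction legitimate; the no-shortcut hypothesis is what makes each inductive step possible.
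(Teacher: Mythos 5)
Your forward direction is sound and is essentially the paper's argument: orient each edge from the letter whose first occurrence comes earlier; acyclicity is immediate, and for a putative shortcut one checks that alternation along the path forces $x_0^i<x_1^i<\cdots<x_t^i$ for all $i$, while alternation of $x_0$ with $x_t$ forces $x_t^i<x_0^{i+1}$, whence every pair on the path alternates and no arc can be missing. Spelling this out is routine, so that half of your proposal is fine.

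The converse, however, has a genuine gap: the key lemma you propose to isolate is false as stated. Take $H=H_{3,3}$ (isomorphic to $C_6$, vertices $u_0,\dots,u_5$ in cycle order) and the sink-extension by an all-adjacent vertex $z$, so that the extended graph is the paper's $G_3$; orienting $C_6$ bipartitely and all edges into $z$ gives a semi-transitive digraph with $z$ a sink, so this is a legitimate step of your induction. Now suppose the word you carry at that moment is the $2$-uniform word $W=u_0u_5u_1u_0u_2u_1u_3u_2u_4u_3u_5u_4$, which does represent $C_6$ (it is a circle-graph representation). No insertion of copies of $z$ into $W$, keeping the old letters in place, can represent $G_3$: if $z$ is to alternate with every $u_i$, then (as in the proof of Lemma~\ref{obs:rep-permrep}) each $u_i$ occurs exactly once between consecutive $z$'s, so $W$ would have to break into consecutive blocks each a permutation of its six letters (up to one block split into a prefix and suffix); that would make $H_{3,3}$ permutationally $2$-representable, contradicting Lemmas~\ref{obs:dimension} and~\ref{cocktail}, since the associated poset has dimension $3$. (One can also check directly that $W$ has no window of length six containing all six letters.) So the invariant ``some uniform word representing the processed part'' is too weak: a perfectly good representant of the subgraph can be a dead end, and the absence of shortcuts in $D$ does not rescue the insertion. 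Finding the right strengthening is exactly the hard content of this direction, and the paper sidesteps incremental insertion altogether: it covers the vertices of $D$ by directed paths, and for each path $P_i$ builds a tailored topological sort $S_i$ of the doubled digraph $D^2$ (first occurrences of $P_i$-vertices as late as possible, second occurrences as early as possible), so that $S_i$ kills all non-edges incident with $P_i$ (Lemma~\ref{lem:path-to-2rep}) while every topsort of $D^2$ preserves all edges (Lemma~\ref{lem-top-preserves}); concatenating $S_1\cdots S_\tau$ then yields a $2\tau$-uniform representant. Your proposal would need an analogous global mechanism (or a much stronger inductive invariant tied to the orientation) before it could be completed.
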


We first need some additional definitions and lemmas. A {\em
topological order} (or {\em topsort}) of an acyclic digraph is a
permutation of the vertices that obeys the arcs, i.~e. for each arc
$uv$, $u$ precedes $v$ in the permutation. For a node-labeled
digraph, let it also refer to the word obtained by visiting the
nodes in that order. Let $D=(V,E)$ be a digraph.  The
\emph{$t$-string} digraph $D^t$ of $D$ is defined as follows.  The
vertices of $D^t$ are $v^i$, for $v \in V$ and $i=1, 2, \ldots, t$,
and $v^iu^j$ is an arc in $D^t$ if and only if either $i=j$ and $vu
\in E$ or $i < j$ and $uv \in E$. Intuitively, the $t$-string
digraph of $D$ has $t$ copies of $D$ strung together. Given a word
$S$, let $G_S$ denote the graph represented by $S$.  If $S$ is a
topsort of $D^t$ then we also denote by $G_S$ the graph represented
by the word $S'$ obtained from $S$ by omitting the superindices of
the vertices (i.~e. the copies of the same vertex in $S$ are
considered as the same letters in $S'$).

Given a digraph $D$, let $G_D$ be the graph obtained by ignoring orientation.

We argue that the word representing a semi-transitive digraph comes
from a special topological ordering of the $t$-string digraph $D^t$
for some $t$. We first observe that any topological ordering of
$D^t$ preserves arcs.

\begin{lemma}\label{lem-top-preserves}
Let $D$ be a digraph with distinct node-labels.
Let $S$ be
a topological ordering of a $D^t$.
Then $G_D$ is a subgraph of $G_S$.
\end{lemma}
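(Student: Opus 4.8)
The plan is to show that the arcs of $D$ survive in $G_S$, i.e.\ that whenever $uv\in E(D)$, the corresponding letters $u$ and $v$ alternate in the word $S'$ obtained from $S$ by forgetting superindices. Fix an arc $uv\in E(D)$ and consider the $2t$ occurrences $u^1,\dots,u^t,v^1,\dots,v^t$ inside $S$. By the definition of the $t$-string digraph $D^t$, for every $i$ we have the arc $u^iv^i$ (the ``same-layer'' case $i=j$, $uv\in E$), and for every pair $i<j$ we have the arc $v^iu^j$ (the ``cross-layer'' case $i<j$ with $uv\in E$). Since $S$ is a topological ordering of $D^t$, each arc of $D^t$ is respected, so $u^i<v^i$ for all $i$ and $v^i<u^j$ for all $i<j$.

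First I would combine these inequalities to pin down the relative order of the $2t$ letters in $S$. From $u^i<v^i$ and $v^i<u^{i+1}$ we get $u^1<v^1<u^2<v^2<\cdots<u^t<v^t$; the remaining cross-layer arcs $v^iu^j$ with $j>i+1$ are then automatically consistent with this chain. Hence in $S$ the occurrences of $u$ and $v$ appear in the strictly alternating pattern $u\,v\,u\,v\cdots u\,v$. Passing to $S'$, the subword induced by the two letters $u,v$ is exactly $uvuv\cdots uv$, which contains neither $uu$ nor $vv$ as a factor, so $u$ and $v$ alternate in $S'$. Therefore $(u,v)\in E(G_S)$.

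This shows every edge of $G_D$ is an edge of $G_S$ on the same vertex set (the labels are distinct, so the vertex sets of $G_D$ and $G_S$ are naturally identified), which is precisely the statement that $G_D$ is a subgraph of $G_S$. I do not expect any real obstacle here: the only point requiring a little care is bookkeeping the two cases in the definition of $D^t$ and checking that the chain of inequalities $u^1<v^1<\cdots<u^t<v^t$ is forced and consistent with all arcs of $D^t$ between these $2t$ vertices, rather than just a subset of them. Note that $G_S$ may well have strictly more edges than $G_D$ (non-adjacent vertices of $D$ can end up alternating in $S$), which is why the lemma only claims the subgraph relation and not equality.
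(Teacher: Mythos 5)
Your proof is correct and follows essentially the same route as the paper: the inequalities $u^i<v^i$ and $v^i<u^{i+1}$ you derive are exactly the paper's observation that $D^t$ contains the directed path $u^1v^1u^2v^2\ldots u^tv^t$, which forces $u$ and $v$ to alternate in any topological ordering. You merely spell out the bookkeeping of the two cases in the definition of $D^t$ a bit more explicitly.
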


\begin{proof}
Consider an edge $uv$ in $G_D$, and suppose without loss of
generality that it is directed as $uv$ in $D$. Then, in $D^t$, there
is a directed path $u^1v^1u^2v^2 \ldots u^tv^t$. Thus, occurrences
of $u$ and $v$ in a topsort of $D^t$ are alternating. Hence,
$uv \in G_S$.
\end{proof}

To prove equivalence, we now give a method to produce a topological
ordering that generates all non-arcs. We say that a subgraph $H$
\emph{covers} a set $A$ of non-arcs if each non-arc in $A$ is also
found in $H$. A word covers the non-arcs if the graph it represents
covers them.

\begin{lemma}
The non-arcs incident with a path in a semi-transitive digraph can
be covered with a $2$-uniform word. \label{lem:path-to-2rep}
\end{lemma}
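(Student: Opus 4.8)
By Lemma~\ref{lem-top-preserves}, every topological sort of $D^2$ represents a graph containing $G_D$; so, reading the lemma in the way it will be used (the word must keep every arc of $D$ an edge while turning the listed non-arcs into non-edges), it suffices to produce a topological sort $W$ of $D^2$ in which, for every non-arc $\{x,y\}$ of $D$ with an endpoint on $P=v_0v_1\ldots v_p$, the two copies of $x$ and the two copies of $y$ do not alternate. Writing $x^1,x^2$ for the two occurrences of a vertex $x$ in such a word and $I(x)=[x^1,x^2]$ for the interval they span, $W$ being a topological sort of $D^2$ amounts to: the first occurrences, read left to right, and the second occurrences, read left to right, each form a topological sort of $D$ (call them $A$ and $B$), and $y^1<x^2$ whenever $xy\in E$; and then $x,y$ alternate in $W$ iff $I(x),I(y)$ \emph{properly overlap} (intersect with neither containing the other). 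So the task is: choose topological sorts $A,B$ of $D$ and shuffle them legally so that $I(v_i)$ and $I(x)$ are disjoint or nested -- never a proper overlap -- for every non-arc $\{v_i,x\}$ with $v_i$ on $P$.

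\textbf{Step 1 (how a vertex sees $P$).} For every $x\in V$, the sets $\mathrm{In}(x)=\{i:v_ix\in E\}$ and $\mathrm{Out}(x)=\{i:xv_i\in E\}$ are each a (possibly empty) interval of indices; if $x\notin V(P)$ then $\max\mathrm{In}(x)<\min\mathrm{Out}(x)$; and if $x=v_\ell$ then $\mathrm{In}(v_\ell)$ ends at $\ell-1$ and $\mathrm{Out}(v_\ell)$ begins at $\ell+1$. The proof is the semi-cycle argument: if $i+2\le j$ and $v_ix,v_jx\in E$, then $v_i\to v_{i+1}\to\cdots\to v_j\to x$ together with the arc $v_i\to x$ is a semi-cycle, so shortcut-freeness forces its vertex set to induce a transitive tournament and hence $v_kx\in E$ for all $i\le k\le j$; the statement for $\mathrm{Out}$ is symmetric; and $xv_i,v_jx\in E$ with $i\le j$ and $x\notin V(P)$ gives the directed cycle $x\to v_i\to\cdots\to v_j\to x$, impossible. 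Consequently the path vertices non-adjacent to $x$ fall into a \emph{prefix}, a possible \emph{gap} (only for $x\notin V(P)$), and a \emph{suffix}, while $\mathrm{In}(x)$ and $\mathrm{Out}(x)$ each index a transitive tournament whose members are all adjacent to $x$.

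\textbf{Step 2 (the word).} Build $W$ by processing $v_0,v_1,\ldots,v_p$ in order. The path vertices receive intervals realising $G_D[V(P)]$, which by Step 1 is a proper interval graph whose natural order is the path order, so there is a representation listing left endpoints and right endpoints both in the order $v_0,\ldots,v_p$ -- for a chordless path this is the word $v_0(v_1v_0)(v_2v_1)\cdots(v_pv_{p-1})v_p$, and a chord is automatically accommodated because the representation already encodes every on-path adjacency. Each $x\notin V(P)$ with $\mathrm{In}(x)=[c,d]$ and $\mathrm{Out}(x)=[a,b]$ is inserted so that its first occurrence lies inside the run $v_c,\ldots,v_d$ (between the last of their left endpoints and the first of their right endpoints -- a nonempty gap, since that run is a tournament and so its intervals pairwise overlap) and its second occurrence lies inside the run $v_a,\ldots,v_b$; this makes $I(x)$ overlap exactly the intervals of $x$'s on-path neighbours, makes $I(x)$ nest with the interval of each gap vertex (one inside the other, choosing $A$ and $B$ to disagree on that pair as needed), and keeps $I(x)$ clear of the prefix and suffix intervals. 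Off-path vertices with no on-path neighbour, and all remaining ties among off-path vertices, are placed by a fixed topological sort of $D$, so that arcs among off-path vertices still overlap. Finally let $W$ list all $2|V|$ endpoints from left to right.

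\textbf{Step 3 (verification) and the main obstacle.} It remains to check that the left- and right-endpoint sequences are topological sorts of $D$, that $y^1<x^2$ for every arc (so $W$ is indeed a topological sort of $D^2$, whence $G_D\subseteq G_W$ by Lemma~\ref{lem-top-preserves}), and that every path-incident non-arc $\{v_i,x\}$ yields a disjoint pair of intervals (when $v_i$ or $x$ lies in the prefix or suffix of the other, or both lie on $P$) or a nested pair (when $i$ is a gap index of $x$) -- in all cases a non-edge. This simultaneous verification is the crux and where the effort goes; what makes it possible is exactly Step 1, which guarantees that a vertex's on-path neighbours sit in at most two consecutive tournament blocks in a prescribed order, so a single interval can be aimed to overlap precisely those blocks and avoid the prefix, gap, and suffix. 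One repeatedly falls back on shortcut-freeness to rule out awkward chords (for instance, to show that a chord reaching across a vertex's blocks cannot prevent a gap interval from nesting, one applies the semi-cycle argument to a directed path routed through that vertex). The fussiest case is an off-path vertex whose on-path neighbourhood has a genuine gap: that is what forces nested intervals, hence $A\ne B$ on that pair; running the construction as an induction on $p$, appending one path vertex at a time -- which never disturbs the two-letter subwords already fixed -- is the cleanest way to keep this under control.
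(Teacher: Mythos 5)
Your Step 1 is correct and genuinely in the spirit of the subject: the semi-cycle/shortcut-freeness argument does show that the on-path in-neighbours and out-neighbours of any vertex form consecutive blocks (each inducing a transitive tournament), with in-block preceding out-block. But from Step 2 on the argument is a program rather than a proof, and you say so yourself: Step 3 declares the simultaneous verification ``the crux and where the effort goes'' and then does not carry it out. Concretely: (a) the explicit word $v_0(v_1v_0)(v_2v_1)\cdots(v_pv_{p-1})v_p$ does \emph{not} ``automatically accommodate'' chords --- in that word $v_0$ and $v_2$ do not alternate, so a chord $v_0v_2$ is destroyed; what you actually need is the proper-interval representation of $G_D[V(P)]$ with both endpoint orders equal to the path order, and you never pin down that representation or work with it. (b) The insertion windows for an off-path vertex $x$ are not shown to exist or to be safe: chords from prefix/suffix non-neighbours into $x$'s blocks (e.g.\ $v_{c-1}$ adjacent to a vertex of the out-block) can make the naive window empty or create a spurious proper overlap with a non-neighbour, and ruling these out requires further shortcut arguments that you only gesture at. (c) You never verify that the final word is a topological sort of $D^2$, i.e.\ that $x^1<y^1<x^2<y^2$ for \emph{every} arc $xy$, including arcs between two off-path vertices whose occurrences are pinned to different blocks of $P$; whether semi-transitivity excludes the bad relative configurations is exactly the kind of case analysis that is missing. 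Since the lemma is used in the theorem precisely through this topsort property, the omission is not cosmetic.

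For contrast, the paper avoids all of this geometry with an extremal choice: take \emph{any} topological sort of $D^2$ in which the first occurrences of the vertices of $P$ are as late as possible and their second occurrences as early as possible. Then for a non-arc $\{u,v\}$ with $u\in P$ one only needs a short dependency argument: if there is a directed path from $u$ to $v$, a single application of shortcut-freeness (lifting an arc $x^1y^2$ of $D^2$ back to $y^1x^1$ and closing a path $y^1\leadsto u^1\leadsto v^1\leadsto x^1$) shows $u^2$ cannot depend on $v^1$, so the occurrences read $u^1u^2v^1v^2$; the remaining two cases (path from $v$ to $u$, or incomparable) are symmetric or immediate, giving $v^1v^2u^1u^2$ or a nested pattern. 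Being a topsort of $D^2$, this word preserves all arcs by Lemma~\ref{lem-top-preserves}, so nothing else needs checking. If you want to rescue your interval construction, you would have to supply the full verification of (a)--(c); as it stands, the proposal has a genuine gap.
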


\begin{proof}
Let $P$ be a path in a semi-transitive digraph $D$. We shall form a
topsort $S$ of the 2-string digraph $D^2$ and show that it covers
all non-arcs having at least one endpoint on $P$.

We say that a node $x$ of $D^{2}$ \emph{depends on} node $y$ if there is a directed path from $y$ to $x$ in $D^{2}$, i.~e.
$y$ must
appear before $x$ in a topological ordering of $D^{2}$. We use the notation $y \leadsto x$ if $x$ depends on $y$.
A node is listed \emph{earliest possible} if it is listed as soon as
all nodes that it depends on have been listed.
A node is listed \emph{latest possible} if it is listed after all nodes
that do not depend on it.

Let $S$ be any topological ordering of $D^2$ where the first occurrences
of nodes in $P$ are as late as possible and the second occurrences are
as early as possible. The ordering of other nodes is arbitrary, within
these constraints.

We claim that this word $S$ covers all non-arcs involving nodes in
$P$. Consider a pair $u,v$, where $uv \not\in G_D$ and $u \in P$.
Note that $v$ may also belong to $P$, in which case we may assume
that the path goes from $u$ to $v$. Consider the listings of $u^1,
v^1, u^2, v^2$, where the subscript refers to the occurrence number
of the node. Observe that $u$ may depend on $v$, or vice versa, but
not both. There are three cases to consider.

Case (i): There is a path from $u$ to $v$ in $D$. We claim that
$u^2$ does not depend on $v^1$. Suppose it does, i.~e. $v^1 \leadsto
u^2$. Then, there is an arc $x^1y^2 \in D^2$ such that $v^1 \leadsto
x^1$  and $y^2 \leadsto u^2$. By the assumptions and the symmetry of
the two copies of $D$ in $D^2$, it follows that $y^1 \leadsto u^1
\leadsto v^1 \leadsto x^1$. By the definition of $2$-string graphs,
$yx$ is an arc in $D$, so $y^1x^1 \in E(D^2)$. Then, by
semi-transitivity, $u^1v^1 \in E(D^2)$, or $uv \in E(G_D)$, which is
a contradiction. It now follows that the nodes will occur as $u^1
u^2 v^1 v^2$ in $S$, i.~e. $uv\not\in E(G_S)$.

Case (ii): There is a path from $v$ to $u$ in $D$.
This is symmetric to case (i), with $u$ replaced by $v$.
Thus, the nodes will occur as $v^1 v^2 u^1 u^2$ in $S$.

Case (iii): The nodes $u$ and $v$ are incomparable in $D$. In
particular, $v$ is not in $P$. Then, $u^1$ and $v^1$ do not depend
on each other, nor do $u^2$ and $v^2$. If $v^2$ depends on $u^1$
then the nodes occur as $v^1u^1u^2v^2$ in $S$. Otherwise, their
order is $v^1v^2u^1u^2$.
\end{proof}


We now return to the proof of
Theorem~\ref{thm:rep-equals-semi-trans}, starting with the forward
direction. Given a word-representant $S$, we direct an edge from $x$
to $y$ if the first occurrence of $x$ is before that of $y$ in the
word. Let us show that such orientation $D$ of $G_S$ is
semi-transitive. Indeed, assume that $x_0x_t\in E(D)$ and there is a
directed path $x_0x_1\ldots x_t$ in $D$. Then in the word $S$ we
have $x_0^i<x_1^i<\ldots<x_t^i$ for every $i$. Since $x_0x_t\in
E(D)$ we have $x_t^i<x_0^{i+1}$. But then for every $j<k$ and $i$
there must be $x_j^i<x_k^i<x_j^{i+1}$, i.~e. $x_ix_j\in E(D)$. So,
$D$ is semi-transitive.


For the other direction, denote by $G$ the graph and by $D$ its
semi-transitive orientation.  Let
$P_1,P_2,\ldots, P_{\tau}$ be the set of directed paths covering all
vertices of $D$. For every $i=1,2,\ldots, \tau$ denote by $S_i$ the
topsort of the digraph $D^2$ satisfying the conditions of
Lemma~\ref{lem:path-to-2rep} for the path $P_i$. Put $S=S_1S_2\ldots
S_{\tau}$. Clearly, $S$ is a $2\tau$-uniform word; it can be treated as
a topsort of a $2\tau$-string $D^{2\tau}$. Then $G=G_S$. Indeed, by
Lemma~\ref{lem-top-preserves} we have $E(G)\subset E(G_S)$. On the other
hand, if $uv\not\in E(G)$ then $u\in P_i$ for some $i$, and thus by
Lemma~\ref{lem:path-to-2rep} the letters $u$ and $v$ are not alternating
in the subword $S_i$. Therefore, $uv\not\in E(S)$.
Theorem~\ref{thm:rep-equals-semi-trans} is proved. $\qed$
\smallskip

Theorem~\ref{thm:rep-equals-semi-trans} makes clear the relationship to
comparability graphs, which are those that have transitive orientations.
Since transitive digraphs are also semi-transitive, this immediately
implies that comparability graphs are representable.

\section{The Representation Number of Graphs}\label{nice}

We focus now on the following question: Given a representable graph,
how large is its representation number? In~\cite{KP}, certain
classes of graphs were proved to be 2- or 3-representable, and an
example was given of a graph (the triangular prism) with the
representation number of 3. On the other hand, no examples were
known of graphs with representation numbers larger than 3, nor were
there any non-trivial upper bounds known. We show here that the
maximum representation number of representable graphs is linear in
the number of vertices.

For the upper bound, we use the results of the preceding section.
We have the following directly from the proof
of Theorem~\ref{thm:rep-equals-semi-trans}.

\begin{cor} \label{cor1}
A representative graph $G$ is $2\tau(G)$-representable, where
$\tau(G)$ is the minimum number of paths covering all nodes in some
semi-transitive orientation of $G$.
\end{cor}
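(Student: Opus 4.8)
The plan is to read this directly off the proof of Theorem~\ref{thm:rep-equals-semi-trans}. In the ``other direction'' argument of that proof, we started with a semi-transitive orientation $D$ of $G$, chose a collection $P_1,\ldots,P_\tau$ of directed paths covering all vertices of $D$, and for each $P_i$ built a topsort $S_i$ of $D^2$ satisfying Lemma~\ref{lem:path-to-2rep}. The concatenation $S=S_1S_2\cdots S_\tau$ was shown to represent $G$, and each $S_i$ is $2$-uniform, so $S$ is $2\tau$-uniform. The only additional observation needed for the corollary is that nothing in that argument forced $\tau$ to be any particular value: \emph{any} path cover works, so we are free to take one of minimum size. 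Hence, if we let $\tau(G)$ denote the minimum, over all semi-transitive orientations $D$ of $G$, of the minimum number of directed paths needed to cover $V(D)$, the construction yields a $2\tau(G)$-uniform word representing $G$.

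So the key steps, in order, are: (1) invoke the existence of a semi-transitive orientation $D$ of $G$, which is guaranteed by Theorem~\ref{thm:rep-equals-semi-trans} since $G$ is representable; (2) fix a semi-transitive orientation and a minimum directed-path cover $P_1,\ldots,P_{\tau(G)}$ of its vertices achieving the value $\tau(G)$; (3) apply Lemma~\ref{lem:path-to-2rep} to each $P_i$ to get a $2$-uniform topsort $S_i$ of $D^2$ covering all non-arcs incident with $P_i$; (4) concatenate to get $S=S_1\cdots S_{\tau(G)}$, a $2\tau(G)$-uniform word, and verify $G=G_S$ exactly as in the proof of the theorem — Lemma~\ref{lem-top-preserves} gives $E(G)\subseteq E(G_S)$, and since the $P_i$ cover all vertices, every non-edge of $G$ is witnessed as a non-alternation inside some $S_i$, giving $E(G_S)\subseteq E(G)$.

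There is essentially no obstacle here: the corollary is a bookkeeping consequence of a proof already carried out, and the phrase ``We have the following directly from the proof of Theorem~\ref{thm:rep-equals-semi-trans}'' signals exactly that. The one point worth stating carefully is the optimization over both the orientation and the path cover, since a poor choice of semi-transitive orientation could admit only large path covers; the definition of $\tau(G)$ quantifies over all semi-transitive orientations precisely so that the bound is as strong as the construction allows. If one wanted to be pedantic, one could also note that a $k$-uniform representation trivially gives a $k'$-uniform one for any $k'\ge k$ (by padding, or via Proposition~\ref{pr1a}-type arguments), but that is not needed for the statement as phrased.
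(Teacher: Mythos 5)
Your proposal is correct and matches the paper exactly: the corollary is stated there as following ``directly from the proof of Theorem~\ref{thm:rep-equals-semi-trans},'' and your steps (take a semi-transitive orientation, a minimum directed-path cover of size $\tau(G)$, apply Lemma~\ref{lem:path-to-2rep} to each path, concatenate the $2$-uniform topsorts, and verify $G=G_S$ via Lemma~\ref{lem-top-preserves} and the covering of non-edges) are precisely the construction used there. Nothing further is needed.
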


This immediately gives an upper bound of $2n$ on the representation number.
We can improve this somewhat with an effective procedure.

\begin{theorem} \label{thm:di-to-string}
Given a semi-transitive digraph $D$ on $n$ vertices, there is a
polynomial time algorithm that generates an $n$-uniform word
representing $G_D$. Thus, each representable graph is $n$-representable.
\end{theorem}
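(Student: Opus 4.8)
The plan is to refine the path-cover approach of Corollary~\ref{cor1} so that the $2\tau(G)$ bound becomes at most $n$. The key observation is that Corollary~\ref{cor1} uses $2$ copies of $D$ per path in the cover, but consecutive paths in a topological ordering can share copies: once a path has been ``processed'' using its two allotted strings, later paths do not need fresh strings for the vertices already placed. More precisely, I would proceed greedily. Take a topological order of $D$ and peel off a maximal directed path $P_1$ starting at a source; this path uses up two string-copies by Lemma~\ref{lem:path-to-2rep}, covering all non-arcs incident to $P_1$. Remove $P_1$ and repeat on $D - P_1$, obtaining $P_2, P_3, \ldots$. The issue is that $\sum_i 2|P_i|$ copies is exactly $2\tau(G)$ only if the paths are vertex-disjoint and cover $V$, which gives $2\tau \le 2n$ trivially; to get down to $n$ I need each vertex to contribute at most one (amortized) extra copy beyond a baseline of one.

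The mechanism I would use: build a single topsort $S$ of $D^n$ directly, rather than concatenating per-path words. Partition $V$ into vertex-disjoint directed paths $P_1, \ldots, P_k$ (a path cover; greedily $k$ can be taken with $\sum |P_i| = n$). Index the $n$ string-copies $1, \ldots, n$ and assign to path $P_j$ of length $\ell_j$ the block of copies $[b_j+1, b_j+\ell_j]$ where $b_j = \ell_1 + \cdots + \ell_{j-1}$ — but actually what I want is for the ``first occurrence as late as possible, second occurrence as early as possible'' trick of Lemma~\ref{lem:path-to-2rep} to be realized across a window of two consecutive copies that slides along as we move from path to path. So: process the paths in an order consistent with some topological order of the ``path quotient'' digraph; when processing $P_j$, use copies number (running total so far) and (running total)$+1$ as its ``first'' and ``second'' occurrences, in the sense of the lemma, while the remaining $n - \ell_j$ copies of each vertex of $P_j$ are filled in at their forced positions dictated by the string structure. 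Since the blocks for distinct paths overlap in at most the boundary copy, the total number of copies is $\sum \ell_j + (\text{overlap corrections}) = n$ up to lower-order terms; I would verify the bookkeeping gives exactly $n$, or adjust to $n+O(1)$ and note $n$-representability follows since representable graphs are closed under padding (a $k$-representable graph is $(k+1)$-representable).

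The steps in order: (1) state that a minimum path cover $P_1, \ldots, P_k$ of $D$ into vertex-disjoint directed paths can be found in polynomial time (bipartite matching, Dilworth-style); (2) describe the target word $S$ as a topsort of $D^n$ where the $n$ copies are apportioned so that path $P_j$ gets its two ``active'' copies at indices summing correctly, and show this is a legal topological ordering of $D^n$ (arcs within a copy and across copies are respected — this uses acyclicity of $D$); (3) invoke Lemma~\ref{lem:path-to-2rep}'s argument, suitably localized, to conclude every non-arc incident to $P_j$ is covered within $P_j$'s two-copy window, hence every non-arc of $G_D$ is covered since the $P_j$ cover $V$; (4) invoke Lemma~\ref{lem-top-preserves} to get $E(G_D) \subseteq E(G_S)$; (5) conclude $G_S = G_D$ and count copies. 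The main obstacle I expect is step (2)–(3): verifying that the sliding two-copy window along the path cover can be embedded in a single global topsort of $D^n$ without the windows of different paths conflicting, i.e. that the dependency relation $\leadsto$ in $D^n$ never forces a vertex of $P_j$ into a copy-index outside its designated block in a way that breaks the Lemma~\ref{lem:path-to-2rep} case analysis. Handling incomparable pairs and paths that are ``interleaved'' in the topological order of $D$ (but disjoint as vertex sets) is where the care is needed; I would likely reduce to Lemma~\ref{lem:path-to-2rep} as a black box applied copy-block by copy-block, choosing the within-block ordering of non-path vertices exactly as that lemma prescribes and arguing global consistency by induction on $j$.
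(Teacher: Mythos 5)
Your approach diverges from the paper's and has a genuine gap in exactly the place you flag, plus a counting problem that the sliding-window idea does not repair. With a vertex-disjoint path cover, the number of paths $k$ can be as large as $n$ (think of a digraph with many pairwise incomparable vertices, where the minimum path cover consists mostly of singletons), so charging two copies of $D$ per path gives $2k$, potentially $2n$; everything therefore hinges on the claim that consecutive paths can share copies inside one global topsort of $D^n$. But Lemma~\ref{lem:path-to-2rep} needs full control of both copies in $D^{2}$ for its path --- first occurrences of path vertices as late as possible, second occurrences as early as possible, with all other vertices placed relative to them --- and two different paths impose conflicting demands on a shared copy. You acknowledge this is ``where the care is needed'' but never resolve it, and your bookkeeping (assigning $\ell_j$ copies to a path of length $\ell_j$, with ``$\sum\ell_j + \mbox{overlap} = n$'') is not tied to any actual construction: the lemma uses exactly two copies per path regardless of its length. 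Finally, the fallback ``get $n+O(1)$ and use closure under padding'' goes the wrong way --- padding shows that $k$-representable implies $(k+1)$-representable, so it cannot bring $n+O(1)$ down to $n$.

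The paper's proof is much simpler and sidesteps all of this. It greedily chooses paths $P_1,\ldots,P_t$ (not required to be vertex-disjoint) subject only to each covering at least two vertices not covered before, takes for each the $2$-uniform word $S_i$ of Lemma~\ref{lem:path-to-2rep}, and concatenates: $S'=S_1S_2\cdots S_t$ can be viewed as a topsort of $D^{2t}$, so arcs stay alternating (Lemma~\ref{lem-top-preserves}) and non-arcs killed inside some $S_i$ stay killed after concatenation. Since each path accounts for at least two new vertices, $2t\le n-|B|$, where $B$ is the leftover set; the vertices of $B$ are pairwise incomparable (otherwise the greedy step would not have stopped), and a single appended permutation $S_0$ --- a topsort of $D$ listing $B$ in the reverse of their order in $S_1$ --- destroys any unwanted alternation among them while keeping every arc alternating, for a total of at most $2t+1\le n$ occurrences per letter. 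The two ideas your proposal is missing are precisely these: charge two copies only to paths that pick up at least two new vertices, and handle the remaining antichain with one extra reversed permutation rather than with per-path windows inside a single topsort of $D^n$.
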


\begin{proof} The algorithm works as follows.

Step 0. Start with $A=\emptyset$ and $i=1$.

Step $i$. If $D$ contains a path $P_i$ covering at least two vertices from $V\setminus A$ then let $A:=A\cup V(P_i)$ and $i:=i+1$. Otherwise,
let $B=V\setminus A$ and go to the Final Step.

Final Step. Let $S_i$ be the topsort of the digraph $D^2$ satisfying the
conditions of Lemma~\ref{lem:path-to-2rep} for the path $P_i$ and put
$S'=S_1S_2\ldots S_t$ where $t$ is the number of paths found at previous
steps. If $|B|\le 1$ then let $S=S'$. Otherwise, consider a topsort
$S_0$ of $D$ where the vertices of $B$ are listed in a row (since the
vertices of $B$ do not depend on each other, such a topsort must exist)
and in particular in the reverse order of their appearance in $S_1$.
Let $S = S' S_0$.

Clearly, $G_D=G_S$ (the proof is the same as in
Theorem~\ref{thm:rep-equals-semi-trans}).
It is easy to verify that each letter appears in $S$ at most $n$ times.
\end{proof}

Theorem~\ref{thm:di-to-string} implies that the graph
representability is polynomially verifiable, answering an open
question in~\cite{KP}. Indeed, having a representable graph $G$, we
may ask for a word $W$ $k$-representing it and verify this fact in
time bounded by the polynomial on $k$ and $n$. Since $k\le n$, this
is a polynomial on $n$. So, we have proved
\begin{cor}
The recognition problem for representable graphs is in NP.
\label{cor:inNP}
\end{cor}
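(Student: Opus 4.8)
The statement to prove is Corollary~\ref{cor:inNP}: the recognition problem for representable graphs is in NP.

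The plan is to exhibit a polynomial-size certificate and a polynomial-time verifier. The natural certificate for a ``yes'' instance (i.e., a representable graph $G$ on $n$ vertices) is simply a word $W$ over the alphabet $V(G)$ that represents $G$. The key point, supplied by Theorem~\ref{thm:di-to-string}, is that if $G$ is representable at all then it is $n$-representable, so there exists such a $W$ of length at most $n^2$ — polynomial in the input size. Thus the certificate is polynomially bounded.

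Next I would describe the verification. Given the graph $G$ (say by its adjacency matrix) and a candidate word $W$ with $\mathrm{Var}(W) = V(G)$, the verifier checks, for each unordered pair $\{x,y\}$ of distinct vertices, whether $x$ and $y$ alternate in $W$, and confirms that this holds exactly when $(x,y) \in E(G)$. Checking alternation for one pair amounts to scanning $W$ once, deleting all letters other than $x,y$, and testing that the resulting word contains no factor $xx$ or $yy$; this is linear in $|W|$. Doing this for all $O(n^2)$ pairs costs $O(n^2 |W|) = O(n^4)$ time, which is polynomial. If every pair passes, accept; otherwise reject. Soundness is immediate: if $W$ passes all checks then by definition $W$ represents $G$, so $G$ is representable. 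Completeness follows from Theorem~\ref{thm:di-to-string}: if $G$ is representable, an $n$-uniform representing word exists and serves as an accepted certificate.

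There is essentially no hard part here — the corollary is a direct packaging of Theorem~\ref{thm:di-to-string} into the definition of NP. The only thing that needs care is making explicit that the linear upper bound on the representation number is exactly what rules out the a priori possibility that representing words must be superpolynomially long (which was the open question in~\cite{KP}); without that bound, membership in NP would not be clear. So I would state the certificate, note its polynomial length via Theorem~\ref{thm:di-to-string}, and describe the trivial alternation-checking verifier, concluding that recognition of representable graphs lies in NP.
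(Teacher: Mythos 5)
Your proof is correct and follows the same route as the paper: it invokes Theorem~\ref{thm:di-to-string} to bound the certificate (an $n$-uniform representing word of length at most $n^2$) and then verifies alternation for all pairs in polynomial time. The paper's own argument is exactly this, stated more briefly, so nothing further is needed.
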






We now show that there are graphs with representation number of $n/2$,
matching the upper bound within a factor of 2.

The {\em cocktail party graph} $H_{k,k}$ is the
graph obtained from the complete bipartite graph $K_{k,k}$ by
removing a perfect matching. Denote by $G_k$ the graph obtained from a
cocktail party graph $H_{k,k}$ by adding an all-adjacent vertex.

\begin{theorem} \label{example}
The graph $G_k$ has representation number $k=\lfloor n/2\rfloor$.
\end{theorem}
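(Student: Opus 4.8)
The plan is to establish the two bounds separately. Observe first that $G_k$ has $n = 2k+1$ vertices, so $k = \lfloor n/2 \rfloor$, and note that $G_k$ contains the wheel-free but dense structure $H_{k,k}$ plus a dominating vertex. For the \emph{upper bound}, I would exhibit a semi-transitive orientation of $G_k$ whose vertex set can be covered by roughly $k/2$ directed paths, and then invoke Corollary~\ref{cor1} to get a $2\tau$-representation; more directly, I expect one can write down an explicit $k$-uniform word. The natural candidate: label the two sides of the bipartition $a_1,\dots,a_k$ and $b_1,\dots,b_k$ with the non-edges being exactly $a_ib_i$, let $x$ be the all-adjacent vertex, and build a word of the form $P_1 P_2 \cdots P_k$ where each $P_i$ is (almost) a permutation chosen so that $a_i$ and $b_i$ fail to alternate while every other pair alternates. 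One clean way is to use that $H_{k,k}$ is a comparability graph (it is the incomparability graph complement of a disjoint union of edges, hence itself a comparability graph — indeed it is even a cover graph of a simple poset), so $H_{k,k}$ together with $x$ is \emph{not} a comparability graph in general but the orientation-counting argument still gives $\tau \le \lceil k/2 \rceil$ since $H_{k,k}$ has a Hamiltonian-path-decomposition into about $k/2$ paths in any acyclic orientation obtained by ordering $a_1 < b_2 < a_3 < \cdots$. I would verify semi-transitivity of this orientation directly from the definition of shortcut, checking that no directed path plus a returning arc skips an intermediate non-adjacency.

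For the \emph{lower bound} — that $G_k$ is \emph{not} $(k-1)$-representable — I would argue via Theorem~\ref{t1}: the neighborhood of the all-adjacent vertex $x$ is all of $H_{k,k}$, so $H_{k,k}$ must be permutationally representable, which by Lemma~\ref{l1} it is (it is a comparability graph, with poset dimension exactly $k$, I believe). The key quantitative step is then a counting/pigeonhole argument on any word $W$ that $(k-1)$-represents $G_k$. Since $x$ is adjacent to everything, $x$ must alternate with all $2k$ other letters; with only $k-1$ occurrences of $x$, the occurrences of $x$ partition $W$ into $k$ blocks (including the two ends), and in each maximal block between consecutive $x$'s every other letter can occur at most once (else it would have two consecutive occurrences unseparated by $x$, breaking alternation with $x$) — wait, more carefully, each $a_i$ and $b_i$ occurs at most $k-1$ times total, and the interleaving with $x$ forces each to sit in its own pattern relative to the $k-1$ copies of $x$. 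The heart of the matter: restricting $W$ to $\{a_i,b_i,x\}$, the pair $a_i,b_i$ must \emph{not} alternate, so somewhere two copies of one of them are adjacent or separated only by letters other than $x$; combined with the requirement that both alternate with $x$, this forces, for each index $i$, at least one "wasted" slot, and summing over all $k$ indices against the budget of $(k-1)$ occurrences per letter yields a contradiction. Equivalently, one reduces to showing that the poset whose comparability graph is $H_{k,k}$ requires $k$ permutations to realize (dimension $k$), hence $W$ cannot be built from fewer than $k$ "layers."

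The cleanest route to the lower bound, which I would adopt, is: first show that in any $m$-representation of $G_k$, deleting $x$ and its occurrences yields a word in which the $a_i$'s and $b_i$'s, when we additionally record relative order, must \emph{permutationally} represent $H_{k,k}$ — this uses Proposition~\ref{pr1} (cyclic shift) to normalize and the structure of $x$ being all-adjacent to force each $x$-block into a near-permutation. Then I would cite that $H_{k,k}$, as a comparability graph, needs exactly $k$ permutations because the associated poset (the "standard example" $S_k$, i.e. $k$ disjoint $2$-element chains oriented to make $a_ib_j$ comparable iff $i\neq j$) has Dushnik–Miller dimension $k$ — a classical fact. This gives that $G_k$ is not $(k-1)$-representable and hence the representation number is exactly $k$.

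\textbf{Main obstacle.} The delicate step is the lower bound: converting "$W$ is a $(k-1)$-uniform word representing $G_k$" into "$H_{k,k}$ is permutationally $(k-1)$-representable," because the occurrences of $x$ only \emph{approximately} slice $W$ into permutations — one must carefully handle the end-blocks and the fact that a letter need not appear in every $x$-block. I expect the argument needs a short lemma: if $G$ has an all-adjacent vertex $x$ and $G$ is $m$-representable, then $G - x$ is permutationally $m$-representable (equivalently, by Lemma~\ref{l1}, $G-x$ is a comparability graph with dimension at most $m$). Proving that lemma cleanly — likely by taking the word, inserting a fresh copy of $x$ at both ends if needed, and reading off $m$ permutations from the $m$ gaps between consecutive $x$'s after padding absent letters in a consistent transitive order — is where the real work lies; everything else is either the explicit construction for the upper bound or an appeal to the known dimension of the standard example $S_k$.
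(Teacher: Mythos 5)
Your route is in substance the paper's own: reduce through the all-adjacent vertex to permutational representability of $H_{k,k}$, translate that into the Dushnik--Miller dimension of the induced poset, and use that the ``standard example'' has dimension exactly $k$. The paper packages this as Lemma~\ref{obs:rep-permrep} ($G$ is $k$-representable iff $H$ is permutationally $k$-representable), Lemma~\ref{obs:dimension} (permutational $k$-representability of a comparability graph equals dimension at most $k$), and Lemma~\ref{cocktail} (the dimension of that poset is $k$, proved from scratch rather than cited), and both bounds fall out of the same equivalence; so you have identified the right skeleton.

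The step you flag as ``where the real work lies,'' however, is not delicate at all, and the obstacle you anticipate does not exist: $k$-representability is defined via $k$-uniform words, so in a hypothetical $(k-1)$-representation of $G_k$ the dominating letter $x$ and every other letter occur exactly $k-1$ times. Since each letter must alternate with $x$ and has the same number of occurrences as $x$, after a cyclic shift (Proposition~\ref{pr1}) making the word start with $x$, every other letter appears exactly once in each of the $k-1$ gaps determined by the occurrences of $x$; deleting $x$ therefore yields $k-1$ honest permutations --- no end-block problems, no absent letters, no ``padding in a consistent transitive order'' (which, as you describe it, could anyway disturb alternation among the remaining letters and would need its own justification). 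Conversely, inserting $x$ in front of each permutation of a permutational $k$-representation of $H_{k,k}$ (which exists since the dimension is exactly $k$) gives the upper bound directly, so the detour through Corollary~\ref{cor1} and the unsubstantiated claim that some semi-transitive orientation of $G_k$ admits a path cover of size about $k/2$ is unnecessary (and would only yield a $2\tau$-uniform word in any case). One aside is also wrong, though harmless: $G_k$ \emph{is} a comparability graph --- orient all edges out of $x$ and extend a transitive orientation of $H_{k,k}$.
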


The proof is based on three statements.

\begin{lemma}
Let $H$ be a graph and $G$ be the graph obtained from $H$ by adding an
all-adjacent vertex.
Then $G$ is $k$-representable if and only if $H$ is permutationally $k$-representable.
\label{obs:rep-permrep}
\end{lemma}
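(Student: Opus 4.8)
The plan is to prove both directions by relating a $k$-uniform representation of $G$ to a decomposition of a $k$-uniform representation of $H$ into permutations, using the fact that the all-adjacent vertex $a$ must alternate with every letter of $H$ in any word representing $G$.

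First I would handle the forward direction. Suppose $W$ is a $k$-uniform word representing $G$, with $a$ the all-adjacent vertex. Since $a$ alternates with every other letter, between any two consecutive occurrences of $a$ there is exactly one occurrence of each vertex of $H$; writing $a^1, a^2, \ldots, a^k$ for the occurrences of $a$ in order, the word $W$ has the shape $Q_0 a^1 Q_1 a^2 \cdots a^k Q_k$, where $Q_1, \ldots, Q_{k-1}$ are permutations of $V(H)$ and $Q_0, Q_k$ together contain one copy of each letter of $H$ (since the total count of each $H$-letter is $k$ and there are $k-1$ internal blocks already using one copy each). Using Proposition~\ref{pr1} I can cyclically rotate $W$ so that $a^1$ is at the front, turning $W$ into (a rotation representing the same graph) $a^1 Q_1 a^2 \cdots a^k Q_k Q_0$, i.e.\ I may assume $Q_0$ is empty and $Q_k$ is a genuine permutation. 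Now delete $a$ from the word to obtain $W' = Q_1 Q_2 \cdots Q_k$, a concatenation of $k$ permutations of $V(H)$. Because deleting a letter preserves alternation among the remaining letters, $W'$ represents exactly the graph $H$ (the edges of $G$ not involving $a$ are precisely the edges of $H$). Hence $H$ is permutationally $k$-representable.

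For the converse, suppose $H$ is permutationally $k$-representable, say by $W' = P_1 P_2 \cdots P_k$ with each $P_i$ a permutation of $V(H)$. Form the word $W = P_1 \, a \, P_2 \, a \cdots a \, P_k \, a$ on $V(H) \cup \{a\}$ (insert one copy of $a$ after each block). This is $k$-uniform. Alternation among letters of $H$ in $W$ is the same as in $W'$ (the inserted $a$'s do not create a repeated $H$-letter factor and do not separate any previously adjacent equal letters), so those pairs behave exactly as in $H$. And $a$ alternates with every vertex $x$ of $H$: reading $W$, occurrences come as $P_1, a, P_2, a, \ldots$, and exactly one $x$ sits in each $P_i$, so the induced subword on $\{a, x\}$ is $x a x a \cdots x a$ (up to which comes first), which is alternating. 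Therefore $W$ represents $G$, and $G$ is $k$-representable.

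I expect the main subtlety — not a deep obstacle, but the point requiring care — to be the bookkeeping in the forward direction: arguing that the occurrences of the all-adjacent vertex partition $W$ into $k-1$ full permutation blocks plus two partial end blocks, and then using the rotation invariance (Proposition~\ref{pr1}) to normalize these into exactly $k$ permutation blocks after deleting $a$. One must also check that deleting $a$ does not spuriously merge two equal $H$-letters into an $xx$ factor (it cannot, since the alternation of $x$ with itself is unaffected by removing a different letter) and that the edge set of $H$ as represented by $W'$ is exactly $E(G) \setminus \{\,ax : x \in V(H)\,\}$, which is immediate from the definition of $G$.
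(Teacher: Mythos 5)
Your proposal is correct and follows essentially the same route as the paper's proof: observe that the all-adjacent vertex's occurrences chop the word into blocks each containing every other letter exactly once, normalize via Proposition~\ref{pr1} so the word starts with that vertex, and delete it to get a permutational representation of $H$; conversely, insert a copy of the all-adjacent vertex adjacent to each permutation block (you place it after, the paper before — an immaterial difference). No gaps.
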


\begin{proof}
Let 0 be the letter corresponding to the all-adjacent vertex. Then
every other letter of the word $W$ representing $G$ must appear
exactly once between two consecutive zeroes. We may assume also that
$W$ starts with 0. Then the word $W\setminus \{0\}$ is a
permutational $k$-representation of $H$. Conversely, if $W'$ is a
word permutationally $k$-representing $H$, then we insert 0 in front
of each permutation to get a $k$-representation (in fact
permutational) of $G$.
\end{proof}

Recall that the {\em dimension} of a poset is the minimum
number of linear orders such that their intersection induces this
poset.

\begin{lemma}
A comparability graph is permutationally
$k$-representable if and only if the poset induced by this graph
has dimension at most $k$.
\label{obs:dimension}
\end{lemma}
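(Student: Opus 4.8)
The plan is to establish both directions of the equivalence by carefully translating between permutational $k$-representations and families of $k$ linear extensions of the poset $Q$ associated with the comparability graph. Recall from Lemma~\ref{l1} that a comparability graph corresponds to a poset $Q$ (via a fixed transitive orientation of the edges), where $x < y$ in $Q$ exactly when there is an arc from $x$ to $y$. I would first fix such a transitive orientation, giving a partial order $Q$ on $V$, and observe that ``$x$ and $y$ are adjacent'' is the same as ``$x$ and $y$ are comparable in $Q$'', while ``$x$ and $y$ are non-adjacent'' means they are incomparable in $Q$.

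For the forward direction, suppose $G$ is permutationally $k$-representable, say by $W = P_1 P_2 \cdots P_k$ with each $P_i$ a permutation of $V$. Reading each $P_i$ from left to right gives a linear order $L_i$ on $V$. The key claim is that each $L_i$ is a linear extension of $Q$, and that $\bigcap_{i=1}^k L_i = Q$; this exhibits $Q$ as having dimension at most $k$. To see that $L_i$ extends $Q$: if $x < y$ in $Q$ then $x$ and $y$ alternate in $W$, and since $W$ starts with whichever of $x,y$ comes first, one checks (using Proposition~\ref{pr1} to rotate $W$ so that some fixed block boundary is at the front, and the transitivity argument of Lemma~\ref{l1}) that $x$ precedes $y$ in every block $P_i$. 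The nontrivial part is the reverse inclusion $\bigcap L_i \subseteq Q$: if $x$ and $y$ are incomparable in $Q$ they are non-adjacent in $G$, hence do not alternate in $W$, so some block contains $xx$ or $yy$ as the induced pattern across a boundary, forcing $x$ before $y$ in one block and $y$ before $x$ in another; thus neither $(x,y)$ nor $(y,x)$ lies in $\bigcap L_i$. Hence $\dim Q \le k$.

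For the reverse direction, suppose $\dim Q \le k$, witnessed by linear extensions $L_1, \dots, L_k$ of $Q$ with $\bigcap_i L_i = Q$ (pad with copies of one extension if fewer than $k$ are needed, which only helps). Let $P_i$ be the permutation of $V$ listing vertices in $L_i$-order, and set $W = P_1 P_2 \cdots P_k$. I claim $W$ permutationally $k$-represents $G$. If $x,y$ are adjacent, they are comparable in $Q$, say $x < y$; then $x$ precedes $y$ in every $L_i$, so the $x$'s and $y$'s strictly interleave in $W$ and hence alternate. If $x,y$ are non-adjacent, they are incomparable in $Q = \bigcap L_i$, so there is some $i$ with $x <_{L_i} y$ and some $j$ with $y <_{L_j} x$; choosing $i < j$ (possible after reordering the extensions, or by symmetry) produces the factor $\dots x \dots y \dots y \dots x \dots$ which contains a ``$yy$'' pattern in the induced word on $\{x,y\}$, so they do not alternate. (If instead $j < i$ one gets an ``$xx$'' pattern; either way alternation fails.) Hence $W$ represents exactly $G$.

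The main obstacle I anticipate is the bookkeeping in the forward direction: showing precisely that adjacency in $G$ forces the two letters to sit in the \emph{same relative order in every block} of $W$, rather than merely alternating in some looser sense, and conversely that failure of alternation pins down a genuine order-reversal between two blocks. This requires using the normalization afforded by Proposition~\ref{pr1} (cyclic rotation of $k$-uniform words) together with the observation that in a $k$-uniform word the letters $x$ and $y$ alternate if and only if their relative order is the same in each of the $k$ blocks — a small lemma worth isolating. Once that combinatorial fact about block structure is in hand, both inclusions $Q \subseteq \bigcap L_i$ and $\bigcap L_i \subseteq Q$ fall out directly, and the reverse direction is essentially immediate.
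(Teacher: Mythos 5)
Your overall architecture---the observation that in a concatenation of $k$ permutations two letters alternate exactly when they occur in the same relative order in every block, and the resulting two-way translation between permutational $k$-representations and families of $k$ linear orders---is the same as the paper's, and your reverse direction is correct. The genuine problem is in the forward direction, where you fix a transitive orientation $Q$ of the comparability graph in advance and then claim that each $L_i$ is a linear extension of $Q$ and that $\bigcap_i L_i = Q$. This is false in general: alternation of $x$ and $y$ forces them to appear in the same order in every block, but nothing forces that common order to agree with your pre-chosen orientation. Already for $K_2$ with the orientation $x\to y$, the word $yx\,yx$ permutationally $2$-represents it while every block has $y$ before $x$; in general the word induces some transitive orientation of the graph, which need not be $Q$ (nor even its reverse). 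Rotating via Proposition~\ref{pr1} or appealing to the transitivity argument behind Lemma~\ref{l1} cannot repair this, because the representing word carries no information about which transitive orientation you happened to fix.

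The fix is exactly the paper's move: do not fix $Q$ beforehand. Read off the $k$ linear orders from the blocks and let $Q'$ be their intersection. The block-alternation fact then gives directly that $a$ and $b$ are comparable in $Q'$ iff their letters alternate in $W$ iff $a$ and $b$ are adjacent in $H$; hence $Q'$ is a poset whose comparability graph is $H$ and whose dimension is at most $k$, which is what the lemma's phrase ``the poset induced by this graph'' is meant to capture (it is only determined up to the choice of transitive orientation; if you insist on concluding $\dim Q\le k$ for your particular $Q$, you additionally need the fact that all posets with the same comparability graph have the same dimension, which you do not prove and the paper does not invoke). With that adjustment, your argument---including the isolated lemma about relative order within blocks and your construction in the converse direction---coincides with the paper's proof.
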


\begin{proof}
Let $H$ be a comparability graph and $W$ be a word permutationally
$k$-representing it.  Each permutation in $W$ can be considered as a
linear order where $a<b$ if $a$ meets before $b$ in the permutation (and
vice versa). We want to show that the comparability graph of the poset
induced by the intersection of these linear orders coincides with $H$.

Two vertices $a$ and $b$ are adjacent in $H$ if and only if their
letters alternate in the word. So, they must be in the same order in
each permutation, i.~e. either $a<b$ in every linear order or $b<a$
in every linear order. But this means that $a$ and $b$ are comparable in the
poset induced by the intersection of the linear orders, i.~e. $a$ and
$b$ are adjacent in its comparability graph.
\end{proof}

The next statement most probably is known but we give its proof here for
the sake of completeness.

\begin{lemma} \label{cocktail}
The poset $P$ over $2k$
elements $\{a_1,a_2,\ldots,a_k,b_1,b_2,\ldots,b_k\}$ such that $a_i<b_j$ for
every $i\ne j$ and all other elements are not comparable has dimension $k$.
\end{lemma}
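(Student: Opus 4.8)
The plan is to establish the two inequalities $\dim(P) \le k$ and $\dim(P) \ge k$ separately. For the upper bound, I would exhibit $k$ linear extensions of $P$ whose intersection is exactly $P$. The natural candidates are the orders $L_1, L_2, \ldots, L_k$, where $L_i$ places $b_i$ at the very bottom and $a_i$ at the very top, and arranges the remaining elements so that all the required relations $a_j < b_\ell$ (for $j \ne \ell$) hold; concretely, one can take $L_i$ to be
$$ b_i < a_1 < a_2 < \cdots < a_k < b_1 < \cdots < \widehat{b_i} < \cdots < b_k < a_i, $$
where $\widehat{b_i}$ means $b_i$ is omitted from that block. Each $L_i$ is a linear extension of $P$: every relation $a_j < b_\ell$ with $j \ne \ell$ is visible in $L_i$ (the only pair one must check carefully is $a_i < b_j$ for $j \ne i$, which holds because $a_i$ sits at the top while $b_j$ sits in the middle block — wait, that is backwards, so one must instead put $a_i$ in the top block only relative to... ). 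Here lies the one subtlety: I need $L_i$ to reverse exactly the incomparable pair $\{a_i, b_i\}$ in one direction and reverse no comparable pair, while across all $i$ both directions of every incomparable pair $\{a_i,b_i\}$ get realized. So I would instead split into $L_i$ and its ``partner'' or simply check that $\bigcap_i L_i$ forces $a_i \parallel b_i$ for each $i$ (since some $L_i$ has $b_i < a_i$ and every other $L_j$, $j\ne i$, can be taken to have $a_i < b_i$) and forces nothing else. Verifying that no spurious relation survives the intersection — i.e. that $a_i \parallel a_j$, $b_i \parallel b_j$, and $b_i \not< a_j$ — is the routine bookkeeping step.

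For the lower bound $\dim(P) \ge k$, the standard technique is to find $k$ ``critical pairs'' that cannot be simultaneously reversed by fewer than $k$ linear extensions. The incomparable pairs $(b_i, a_i)$ for $i = 1, \ldots, k$ are the candidates: I claim no single linear extension $L$ of $P$ can reverse two of them, say have $b_i < a_i$ and $b_j < a_j$ with $i \ne j$. Indeed, a linear extension must respect $a_i < b_j$ and $a_j < b_i$ (these are genuine relations since $i \ne j$), so in $L$ we would get the cycle $b_i < a_i$, $a_i < b_j$, $b_j < a_j$, $a_j < b_i$ — a contradiction to $L$ being a linear order. Hence each linear extension in a realizer reverses at most one of the $k$ pairs $(b_i,a_i)$, and since a realizer must reverse every incomparable pair at least once, at least $k$ linear extensions are needed.

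Combining the two bounds gives $\dim(P) = k$. The main obstacle is purely in the upper-bound construction: getting an explicit family of $k$ linear extensions that is demonstrably a realizer requires care that each $L_i$ is genuinely order-preserving (no reversal of a true relation) while collectively they reverse both orientations of each pair $\{a_i, b_i\}$ and agree with $P$ on everything else; the lower bound, by contrast, is a clean two-line argument via the incompatible-critical-pairs idea. I would therefore write the lower bound first as a warm-up, then spend the bulk of the proof carefully defining the $L_i$ and checking the intersection.
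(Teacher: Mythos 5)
Your lower bound is sound and is essentially the paper's argument (the paper phrases it as: if one linear order had both $b_i<a_i$ and $b_j<a_j$, then comparing $a_i$ with $a_j$ forces $b_i<a_j$ or $b_j<a_i$, contradicting $a_j<b_i$ and $a_i<b_j$; your direct four-term cycle $b_i<a_i<b_j<a_j<b_i$ is the same contradiction, stated a bit more cleanly).

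The genuine gap is in the upper bound $\dim(P)\le k$. The explicit family you wrote down is not a family of linear extensions of $P$: putting $a_i$ at the very top of $L_i$ forces $b_j<a_i$ for every $j\ne i$, reversing the true relations $a_i<b_j$. You notice this yourself mid-construction (``wait, that is backwards'') but never repair it; the subsequent remarks that the other $L_j$ ``can be taken to have'' $a_i<b_i$ and that checking the intersection is ``routine bookkeeping'' do not exhibit any realizer of size $k$, so the inequality $\dim(P)\le k$ is asserted rather than proved. Note also that with only $k$ extensions you must simultaneously reverse each critical pair $\{a_i,b_i\}$ \emph{and} realize both relative orders of every pair $\{a_i,a_j\}$ and $\{b_i,b_j\}$, so the construction needs care. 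The fix is close to your intent: in $L_i$, place $a_i$ immediately above $b_i$ but still below every other $b_j$. Concretely, the paper takes the order $a_1<a_2<\cdots<a_{k-1}<b_k<a_k<b_{k-1}<\cdots<b_2<b_1$ together with the $k-1$ orders obtained by simultaneously exchanging $(a_k,b_k)$ with $(a_m,b_m)$; in the $i$-th such order $b_i$ is the least of the $b$'s and $a_i$ the greatest of the $a$'s, so every relation $a_i<b_j$ ($i\ne j$) is preserved, the pair $\{a_i,b_i\}$ is reversed exactly in the $i$-th order, and both orientations of each pair $\{a_i,a_j\}$ and $\{b_i,b_j\}$ occur across the family, so the intersection is exactly $P$.
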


\begin{proof}
Assume that this poset is the intersection of $t$ linear orders.
Since $a_i$ and $b_i$ are not comparable for each $i$, their must be
a linear order where $b_i<a_i$. If we have in some linear order both
$b_i<a_i$ and $b_j<a_j$ for $i\ne j$, then either $a_i<a_j$
or $a_j<a_i$ in it. In the first case we have that $b_i<a_j$, in the
second that $b_j<a_i$. But each of these inequalities contradicts
the definition of the poset. Therefore, $t\ge k$.

In order to show that $t=k$ we can consider a linear order
$a_1<a_2<\ldots<a_{k-1}<b_k<a_k<b_{k-1}<\ldots<b_2<b_1$ together
with all linear orders obtained from this order by the simultaneous
exchange of $a_k$ and $b_k$ with $a_m$ and $b_m$ respectively
($m=1,2,\ldots,k-1$). It can be verified that the intersection of
these $k$ linear orders coincides with our poset.
\end{proof}

Now we can prove Theorem~\ref{example}. Since the cocktail party graph
$H_{k,k}$ is a comparability graph of the poset $P$, we deduce from
Lemmas~\ref{cocktail} and~\ref{obs:dimension} that $H_{k,k}$ is
permutationally $k$-representable but not permutationally
$(k-1)$-representable. Then by Lemma~\ref{obs:rep-permrep} we have that
$G_k$ is $k$-representable but not $(k-1)$-representable.
Theorem~\ref{example} is proved. \qed
\medskip

The above arguments help us also in deciding the complexity of
determinining the representation number.
From Lemmas \ref{obs:rep-permrep} and \ref{obs:dimension}, we see
that it is as hard as determining
the dimension $k$ of a poset. Yannakakis \cite{Yann} showed that the latter
is NP-hard, for any $3 \le k \leq \lceil n/2\rceil$.
We therefore obtain the following.

\begin{prop}
Deciding whether a given graph is
$k$-representable, for any given $3\leq k\leq\lceil n/2 \rceil$,
is NP-complete.
\end{prop}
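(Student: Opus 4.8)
The plan is to reduce the problem to Yannakakis's NP-hardness result for recognizing posets of dimension at most $k$, exactly as the surrounding discussion suggests, and then assemble the pieces into a clean statement of NP-completeness. First I would dispatch membership in NP: by Corollary~\ref{cor:inNP} (or more directly by Theorem~\ref{thm:di-to-string}), a representable graph admits a witness word of length at most $n^2$, and for a candidate $k$-uniform word $W$ we can check in polynomial time that each letter occurs exactly $k$ times and that two letters alternate in $W$ precisely when the corresponding vertices are adjacent. Hence ``$G$ is $k$-representable'' is in NP for every fixed $k$, and this also covers the case where $k$ is part of the input with $k\le\lceil n/2\rceil$, since the witness size stays polynomial.

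For NP-hardness I would exhibit a polynomial-time reduction from the problem DIM$_k$: given a poset, decide whether its dimension is at most $k$, which Yannakakis~\cite{Yann} proved NP-hard for every fixed $k$ with $3\le k\le\lceil n/2\rceil$. The reduction takes a poset $P$ on $m$ elements, forms its comparability graph $H=H(P)$, and then outputs the graph $G$ obtained from $H$ by adding one all-adjacent vertex; note $G$ has $n=m+1$ vertices, so the range $3\le k\le\lceil m/2\rceil$ translates into a range of the form $3\le k\le\lceil n/2\rceil$ up to the usual rounding. The correctness of the reduction is precisely the chain of equivalences already established: by Lemma~\ref{obs:rep-permrep}, $G$ is $k$-representable iff $H$ is permutationally $k$-representable; by Lemma~\ref{obs:dimension}, $H$ is permutationally $k$-representable iff the poset it induces — which is $P$ — has dimension at most $k$. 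So $G$ is $k$-representable iff $\dim(P)\le k$, and the reduction is clearly computable in polynomial time (building a comparability graph and adding a vertex).

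There is one technical point I would want to be careful about when invoking Lemma~\ref{obs:dimension}: it speaks of ``the poset induced by this graph,'' but a comparability graph can arise from several non-isomorphic posets of differing dimension. What is actually needed is that for the particular transitive orientation corresponding to $P$, permutational $k$-representability is equivalent to $\dim(P)\le k$; the argument in the proof of Lemma~\ref{obs:dimension} (and its converse direction, which realizes any $k$ linear extensions whose intersection is $P$ as a permutational representation) does give this. I expect this is the main place where a careful reader would demand a sentence of justification, so I would state explicitly that we apply the lemma to the comparability graph equipped with the orientation coming from $P$. The rest is bookkeeping: confirming that the parameter range matches after the $n=m+1$ shift, and observing that for $k$ given as part of the input the reduction is uniform in $k$.

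\begin{proof}
Membership in NP follows from Theorem~\ref{thm:di-to-string}: any representable graph on $n$ vertices is $n$-representable, so a $k$-uniform representing word (with $k\le n$) has length at most $n^2$, and given such a word one checks in polynomial time that every letter occurs exactly $k$ times and that the alternation relation of the word coincides with the edge relation of $G$.

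For NP-hardness we reduce from the problem of deciding, for a poset $P$, whether $\dim(P)\le k$; Yannakakis~\cite{Yann} proved this NP-hard for every fixed $k$ with $3\le k\le\lceil n/2\rceil$. Given a poset $P$ on $m$ elements, let $H$ be its comparability graph, oriented transitively according to $P$, and let $G$ be the graph obtained from $H$ by adding an all-adjacent vertex; then $G$ has $n=m+1$ vertices and is computed in polynomial time. By Lemma~\ref{obs:rep-permrep}, $G$ is $k$-representable if and only if $H$ is permutationally $k$-representable. By Lemma~\ref{obs:dimension} applied to $H$ with the orientation induced by $P$, this holds if and only if $\dim(P)\le k$. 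Hence $G$ is $k$-representable if and only if $\dim(P)\le k$, and after the shift $n=m+1$ the range $3\le k\le\lceil m/2\rceil$ is contained in $3\le k\le\lceil n/2\rceil$, so the reduction applies for every $k$ in the stated range. Combining NP membership with this hardness gives NP-completeness.
\end{proof}
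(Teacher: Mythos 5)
Your proposal is correct and follows essentially the same route as the paper: NP membership via the $n$-representability bound of Theorem~\ref{thm:di-to-string}, and hardness by combining Lemmas~\ref{obs:rep-permrep} and~\ref{obs:dimension} with Yannakakis's result on poset dimension, using the all-adjacent-vertex construction. Your extra remark about applying Lemma~\ref{obs:dimension} to the specific orientation coming from $P$ is a reasonable point of care, but it does not change the argument, which matches the paper's.
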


It was further shown by
Hegde and Jain \cite{HJ} that it is NP-hard to
approximate the dimension of a poset within almost a square root factor.
We therefore obtain the following hardness for the representation number.

\begin{prop}
Approximating the representation number within $n^{1/2-\epsilon}$-factor is
NP-hard, for any $\epsilon > 0$.
\end{prop}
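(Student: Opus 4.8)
The plan is to reduce the approximation problem for the representation number directly to the approximation problem for poset dimension, invoking the construction used in Theorem~\ref{example}. Specifically, I would start from an instance of the poset-dimension approximation problem: a poset $P$ on $m$ elements whose comparability graph $H$ we may assume (by Lemma~\ref{l1}) is built so that dimension and permutational representation number coincide (Lemma~\ref{obs:dimension}). Form the graph $G$ by adding one all-adjacent vertex to $H$, so that $G$ has $n = m+1$ vertices. By Lemma~\ref{obs:rep-permrep}, the representation number of $G$ equals the permutational representation number of $H$, which by Lemma~\ref{obs:dimension} equals $\dim(P)$. Hence any algorithm approximating the representation number of $G$ within a factor $\rho(n)$ yields an approximation of $\dim(P)$ within essentially the same factor, up to the negligible change from $m$ to $m+1$ in the argument of $\rho$.

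The key steps, in order, are: (1) recall the Hegde--Jain result \cite{HJ} that, unless some standard complexity assumption fails (e.g.\ $\mathrm{P}\neq\mathrm{NP}$, or the appropriate hypothesis they use), there is no polynomial-time algorithm approximating poset dimension within a factor $m^{1/2-\epsilon}$ for any fixed $\epsilon>0$; (2) observe that the transformation $P \mapsto H \mapsto G$ is computable in polynomial time, and that it is dimension-preserving in the precise sense spelled out above via Lemmas~\ref{obs:rep-permrep} and~\ref{obs:dimension}; (3) note that $n = m+1$, so $n^{1/2-\epsilon} \le (2m)^{1/2-\epsilon} = O(m^{1/2-\epsilon})$, and more carefully, for any $\epsilon>0$ one can choose $\epsilon' $ with $0<\epsilon'<\epsilon$ so that $n^{1/2-\epsilon} \le m^{1/2-\epsilon'}$ for all sufficiently large $m$; (4) conclude that a polynomial-time $n^{1/2-\epsilon}$-approximation for the representation number would give a polynomial-time $m^{1/2-\epsilon'}$-approximation for dimension, contradicting \cite{HJ}.

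The only mild subtlety — and the step I would be most careful about — is step~(3), the bookkeeping between the parameter $n$ of $G$ and the parameter $m$ of $P$: since the hardness of \cite{HJ} is stated in terms of the number of poset elements, one must make sure the exponent loss incurred by replacing $m$ with $n = m+1$ can be absorbed into an arbitrarily small slackening of $\epsilon$. This is routine but worth stating cleanly. A second point to verify is that the reduction truly preserves the optimum and not merely an inequality in one direction; this is exactly the content of the ``if and only if'' statements in Lemmas~\ref{obs:rep-permrep} and~\ref{obs:dimension}, so no gap remains there. Putting these together gives the claimed NP-hardness of approximating the representation number within $n^{1/2-\epsilon}$ for every $\epsilon>0$.
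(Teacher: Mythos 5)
Your proposal is correct and follows essentially the same route as the paper: the paper derives this proposition directly from Lemmas~\ref{obs:rep-permrep} and~\ref{obs:dimension} (add an all-adjacent vertex to the comparability graph of the poset, so the representation number equals the poset dimension) combined with the Hegde--Jain inapproximability result, exactly as you do. Your extra bookkeeping on $n=m+1$ and the slackening of $\epsilon$ is a careful spelling-out of a step the paper leaves implicit, not a different argument.
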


In contrast with these hardness results, the case $k=2$ turns out to be
easier and admits a succinct characterization.
The following fact essentially appears in~\cite{C}.
Recall that a graph is called {\em a circle graph} if we can arrange
its vertices as chords on a circle in such a way that two nodes in
the graph are adjacent if and only if the corresponding chords
overlap.

\begin{obs}\label{2repr}
A graph is 2-representable if and only if it is a circle graph.
\end{obs}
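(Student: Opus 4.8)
The plan is to establish the two directions separately, in each case translating between the geometric ``chord'' picture and a $2$-uniform word. For the forward direction, suppose $G$ is $2$-representable by a $2$-uniform word $W$. Write $W = x_1 x_2 \cdots x_{2n}$, where each letter of $\mathrm{Var}(W)$ occurs exactly twice. The key observation is that a $2$-uniform word is essentially a \emph{double occurrence word}, and such words are exactly the combinatorial encodings of chord diagrams: place $2n$ points on a circle labelled $x_1, \dots, x_{2n}$ in cyclic order, and for each vertex $v$ draw the chord joining its two occurrences. Two letters $u$ and $v$ alternate in $W$ precisely when their two occurrences interleave as $\ldots u \ldots v \ldots u \ldots v \ldots$, which is exactly the condition that the corresponding chords cross. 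Hence $u$ and $v$ alternate in $W$ iff the chords overlap, so this chord arrangement realizes $G$ as a circle graph. (One should note that reading $W$ linearly versus cyclically does not matter here, which is exactly the content of Proposition~\ref{pr1}.)

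For the reverse direction, suppose $G$ is a circle graph, witnessed by an arrangement of chords $c_v$ for $v \in V(G)$ on a circle. I may assume the $2n$ endpoints of the chords are distinct points on the circle (perturb if necessary; coincident endpoints can always be separated without changing which chords cross). Pick a point on the circle that is not an endpoint and cut the circle there, reading the endpoints in order around the circle starting from the cut. This produces a linear sequence of $2n$ symbols in which each vertex label $v$ appears exactly twice — a $2$-uniform word $W$. By the same crossing-versus-alternation correspondence as above, $u$ and $v$ alternate in $W$ iff $c_u$ and $c_v$ cross iff $uv \in E(G)$. Thus $W$ is a $2$-representation of $G$.

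The only genuine subtlety — and the step I would be most careful about — is pinning down the ``crossing iff alternating'' equivalence and handling the cyclic-versus-linear issue cleanly. Concretely: four distinct points on a circle, labelled in cyclic order, give two chords that cross iff the labels alternate around the circle; when we cut the circle to get a linear word, an alternation $u v u v$ around the circle survives as a (linear) alternation \emph{unless} the cut falls between the first and last of these four points in a way that rotates $uvuv$ to $vuvu$ — but that is still an alternation, so linear alternation of the pair is preserved. Conversely a non-alternating cyclic pattern $uuvv$ stays non-alternating after any cut. So the equivalence is robust, but it deserves an explicit sentence rather than being left to the reader. Once that is in hand, both directions are immediate, and the observation follows; as noted, this correspondence between double occurrence words and chord diagrams is classical and appears in~\cite{C}.
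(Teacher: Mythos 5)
Your proposal is correct and follows essentially the same route as the paper: translate between chord diagrams and $2$-uniform (double occurrence) words, using the fact that two chords cross if and only if the corresponding letters alternate, with the circular-versus-linear reading handled via Proposition~\ref{pr1}. You simply spell out the crossing-iff-alternation step more carefully than the paper does, which is fine.
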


Indeed, given a circle graph $G$, consider the ends of the chords on a
circle as a letters and read the obtained word in a clockwise order
starting from an arbitrary point. It is easy to see that two chords
intersect if and only if the corresponding letters alternate in the
word.  For the opposite direction, place $2n$ nodes at a circle in the
same order as they meet in the word and connect the same letters by
chords.

It follows from Theorems~\ref{2repr} and~\ref{t2} that outerplanar
graphs are circle graphs. Theorem~\ref{2repr} can also be useful as
a tool in proving that a graph is not a circle graph. For example,
non-representable graphs (for instance, all odd wheels $W_{2t+1}$
for $t\geq 2$) are not circle graphs.


\section{Characteristics of Representable Graphs}\label{classif}

When faced with a new graph class, the most basic questions involve the
kind of properties it satisfies: which known classes are properly
contained (and which not), which graphs are otherwise contained (and
which not), what operations preserve representability (or
non-representability), and which properties hold for these graphs.

Previously, it was known that the class of representable graphs
includes comparability graphs, outerplanar graphs, subdivision
graphs, and prisms. The purpose of this section is to clarify this
situation significantly, including resolving some conjectures. We
start with exploring the impact of colorability on representability.

\paragraph{\bf Chromatic number and representability}


\begin{theorem} \label{obs:3col}
3-colorable graphs are semi-transitive, and thus representable.
\end{theorem}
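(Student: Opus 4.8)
The plan is to exhibit, for any proper 3-coloring of $G$ with color classes labeled $1,2,3$, an orientation of the edges and then verify that it is semi-transitive. The natural choice is to orient every edge from the lower-numbered color class to the higher-numbered one. This orientation is automatically acyclic, since along any directed path the color labels strictly increase, so no directed cycle can exist. It remains to show there are no shortcuts.

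The key observation is that a directed path $v_0 v_1 \cdots v_t$ in this orientation has strictly increasing colors $c(v_0) < c(v_1) < \cdots < c(v_t)$, and since there are only three colors, any directed path has at most $3$ vertices, i.e.\ $t \le 2$. By the definition of a shortcut we need $t \ge 3$ for a directed path $v_0 v_1 \cdots v_t$ together with the arc $v_0 v_t$ and some missing arc. Since $t \le 2$ here, no shortcut can occur. Hence the orientation is acyclic with no shortcuts, i.e.\ semi-transitive, and by Theorem~\ref{thm:rep-equals-semi-trans} the graph $G$ is representable.

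I would present this cleanly: first state the orientation rule, then prove acyclicity (path colors strictly increase), then note the length-at-most-$2$ bound on directed paths, and finally invoke the definition of shortcut (which requires $t\ge 3$) to conclude no shortcuts exist. The only subtlety worth a sentence is to make sure the definition of shortcut is being read correctly: a shortcut is built on a semi-cycle $v_0 v_1 \cdots v_t$ with $t \ge 3$, and the relevant directed structure inside it is the directed path $v_0 v_1 \cdots v_t$, which in our orientation cannot exist once $t\ge 3$. So there is essentially no obstacle here; the argument is short. The main thing to be careful about is simply matching our notation to the shortcut definition given in the paper and being explicit that three colors force directed paths of length at most two.
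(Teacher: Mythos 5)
Your proposal is correct and follows exactly the paper's approach: orient each edge from the lower color class to the higher, note that colors strictly increase along directed paths, and conclude there can be no directed cycles and no directed path long enough ($t\ge 3$) to form a shortcut. You simply make explicit the verification that the paper dismisses with ``it is easy to see.''
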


\begin{proof}
Given a 3-coloring of a graph,
direct its edges from the first color class through the second to
the third class. It is easy to see that we obtain a semi-transitive digraph.
\end{proof}

This implies a number of earlier results on representability,
including that of outerplanar graphs, subdivision graphs, and prisms
(see Theorems~\ref{t2},~\ref{t3} and Proposition~\ref{pr4}). The
theorem also shows that
2-degenerate are representable, as well as graphs of maximum degree
3 (via Brooks theorem).

This result does not extend to higher chromatic numbers. The
examples in Fig.~\ref{small} show that 4-colorable graphs can be
non-representable. We can, however, obtain a results in terms of the
{\em girth} of the graph, which is the length of its shortest cycle.

\begin{prop}
Let $G$ be a graph whose girth is greater than its chromatic number.
Then, $G$ is representable.
\end{prop}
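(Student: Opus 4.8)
The plan is to show that a graph $G$ whose girth $g$ exceeds its chromatic number $\chi$ admits a semi-transitive orientation, and then invoke Theorem~\ref{thm:rep-equals-semi-trans}. Fix a proper coloring of $G$ with colors $1, 2, \ldots, \chi$, and orient every edge from its lower-colored endpoint to its higher-colored endpoint. As in the proof of Theorem~\ref{obs:3col}, this orientation $D$ is acyclic (colors strictly increase along any directed path), so it remains only to rule out shortcuts. Suppose for contradiction that $v_0 v_1 \cdots v_t$ is a directed path in $D$ together with the arc $v_0 v_t$, with the arcs $v_i v_j$ absent for all other pairs $i<j$; recall $t \ge 3$, so the shortcut uses $t+1$ vertices.

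The first key observation is that along the directed path the colors strictly increase, so $c(v_0) < c(v_1) < \cdots < c(v_t)$, which forces $t+1 \le \chi$, i.e. $t \le \chi - 1$. The second observation is that the vertices $v_0, v_1, \ldots, v_t$ together with the path edges and the edge $v_0 v_t$ form a cycle of length $t+1$ in $G$ (the $v_i$ are distinct, being distinctly colored, and the edge $v_0 v_t$ closes the directed path into a cycle). Hence $G$ contains a cycle of length $t + 1 \le \chi < g$, contradicting the definition of girth. Therefore $D$ has no shortcuts, so $D$ is semi-transitive and $G$ is representable.

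The only point that needs a little care is the degenerate possibility that the "shortcut" is not actually a genuine induced structure on $t+1$ distinct vertices — but this is automatic here, since a proper coloring assigns distinct colors to $v_0, \ldots, v_t$ (they form a directed path, hence strictly increasing colors), so all $t+1$ vertices are distinct and the closed walk $v_0 v_1 \cdots v_t v_0$ is a genuine cycle. I do not anticipate a real obstacle; the whole argument is a short combination of the coloring-induced orientation from Theorem~\ref{obs:3col} with the elementary fact that a shortcut on $t+1$ vertices yields both a monotone color sequence of length $t+1$ and a cycle of length $t+1$. The mild subtlety, if any, is simply making explicit that every shortcut has at least $4$ vertices and that this is consistent with the bound $t+1 \le \chi$, which is why we need strict inequality $g > \chi$ (so that cycles of length up to $\chi$ are forbidden) rather than $g \ge \chi$.
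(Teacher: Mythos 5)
Your proof is correct and follows essentially the same route as the paper: orient edges from lower to higher colors, note that any shortcut would force a monotone color sequence bounding its length by $\chi$, and hence a cycle of length at most $\chi < g$, a contradiction. You merely spell out in detail what the paper states more tersely, so there is nothing substantively different to compare.
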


\begin{proof}
Suppose the graph is colored with $\chi(G)$ natural numbers. Orient
the edges of the graph from small to large colors. There is no
directed path with more than $\chi(G)-1$ arcs, but since $G$
contains no cycle of $\chi(G)$ or fewer arcs, there can be no
shortcut. Hence, the digraph is semi-transitive.
\end{proof}

Theorem \ref{obs:3col} also implies that the Petersen graph is
representable, turning down a conjecture in ~\cite{KP}. We can show
that the graph is actually 3-representable. We give here two of its
3-representations, related to the numbering in Fig.~\ref{petersen},
that were found in~\cite{KL}:

\begin{itemize}
\item 1, 3, 8, 7, 2, 9, 6, 10, 7, 4, 9, 3, 5, 4, 1, 2, 8, 3, 10, 7,
6, 8, 5, 10, 1, 9, 4, 5, 6, 2 \item 1, 3, 4, 10, 5, 8, 6, 7, 9, 10,
2, 7, 3, 4, 1, 2, 8, 3, 5, 10, 6, 8, 1, 9, 7, 2, 6, 4, 9, 5
\end{itemize}

\setlength{\unitlength}{7mm}
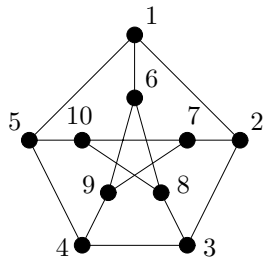
\begin{figure}[h]
\begin{center}
\begin{picture}(8,4.5) \put(2,-1){
\put(2,5){\p}\put(2.2,5.2){1}
\put(2,3.8){\p}\put(2.2,4){6}\put(-0.4,3.2){5}\put(0,3){\p}
\put(0.7,3.3){10}\put(1,3){\p} \put(3,3.3){7}\put(3,3){\p}
\put(4.2,3.2){2}\put(4,3){\p}
\put(0.5,0.8){4}\put(1,1){\p}\put(3.3,0.8){3}\put(3,1){\p}
\put(1,2){9}\put(1.5,2){\p}\put(2.8,2){8}\put(2.5,2){\p}

\path(2,5)(2,3.8)(1.5,2)(3,3)(1,3)(0,3)(1,1)(3,1)(4,3)(2,5)
\path(1,3)(2.5,2)(2,3.8)\path(2,5)(0,3)\path(1,1)(1.5,2)\path(2.5,2)(3,1)\path(3,3)(4,3)}\end{picture}
\caption{Petersen's graph}\label{petersen}
\end{center}
\end{figure}

The following argument shows that Petersen's graph is {\em not}
2-representable. Suppose that the graph is 2-representable and $W$ is a word
2-representing it. Let $x$ be a letter in $W$ such that there are
minimum number of letters between the two appearances of $x$. Clearly,
there are exactly three different letters between them. By symmetry, we
can assume that $x=1$
and by Proposition~\ref{pr1} we can assume that $W$ starts with 1.
So, letters 2,5, and 6 are between the two 1's and because of
symmetry, the fact that Petersen's graph is edge-transitive (that
is, each of its edges can be made ``internal''),
and taking into account that nodes 2, 5, and 6 are pairwise not adjacent,
we can assume that
$W=12561W_16W_25W_32W_4$ where $W_i$'s are some factors for
$i=1,2,3,4$. To alternate with 6 but not to alternate with 5, we
must have $8\in W_1$ and $8\in W_2$. Also, to alternate with 2 but
not to alternate with 5, we must have $3\in W_3$ and $3\in W_4$. But
then 8833 is a subsequence in $W$ and thus 8 and 3 are not adjacent in
the graph, a contradiction.

We explore now further graph properties that are orthogonal to
representability.

\paragraph{\bf Non-representable graphs}

One of the open problems posed in \cite{KP} was the following.

\begin{problem}~\label{p1}
Are there any non-representable graphs that do not satisfy the
conditions of Theorem~\ref{t1}?
\end{problem}

We give here the positive answer.  A counterexample to the converse
of Theorem~\ref{t1} is given by the graph in Fig.~\ref{co-T2} called
co-($T_2$) in~\cite{LINK}. It is easy to check that the induced
neighborhood of any node of the graph co-$(T_2)$ is a comparability
graph.

\setlength{\unitlength}{7mm}
\begin{figure}[ht]
\begin{center}
\begin{picture}(8,4.5) \put(2,-1){
\put(2,5){\p}\put(2.2,5.2){6} \put(1,3.3){3}\put(1.3,3){\p}
\put(2.8,3.3){4}\put(2.7,3){\p}
\put(-2.5,0.8){5}\put(-2,1){\p}\put(6.3,0.8){7}\put(6,1){\p}
\put(1.9,1.3){2}\put(2,1.9){\p} \put(1.9,2.65){1}\put(2,2.4){\p}

\path(2,5)(1.3,3)(2.7,3)(2,5)\path(2,5)(-2,1)(6,1)(2,1.9)(2,2.4)(2.7,3)(6,1)(2,5)
\path(1.3,3)(-2,1)(2,1.9)(1.3,3)(2,2.4)\path(2,1.9)(2.7,3)
}\end{picture} \caption{Co-$(T_2)$ graph}\label{co-T2}
\end{center}
\end{figure}

\begin{theorem} \label{anti-t1}
The graph co-$(T_2)$ is non-representable.
\end{theorem}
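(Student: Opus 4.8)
The plan is to use the characterization of Theorem~\ref{thm:rep-equals-semi-trans}: to show co-$(T_2)$ is non-representable it suffices to show that it admits no semi-transitive orientation. So I would argue that every acyclic orientation of co-$(T_2)$ contains a shortcut. First I would fix the labeling from Fig.~\ref{co-T2} and record the edge set; the key structural feature is the triangle $\{3,4,6\}$ together with the vertices $1,2,5,7$ and the long paths through the "outer" vertices $5$ and $7$. Since any orientation of a triangle is acyclic only if it is transitive, in any acyclic orientation each of the triangles present in the graph is linearly ordered; I would enumerate the triangles of co-$(T_2)$ and use these local linear orders to pin down a source and a sink on the relevant induced subgraph.

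The main combinatorial step is a case analysis on the orientation of a carefully chosen induced subgraph — I expect the four vertices $\{1,2,3,4,6\}$ together with one of the outer vertices, say the induced subgraph on $\{5,1,2,3,4,6\}$, to already force a shortcut, but if not one brings in $7$ as well. The argument in each case is: identify a directed path $v_0 v_1 \ldots v_t$ and an arc $v_0 v_t$ such that some intermediate pair $v_i v_j$ is a non-edge of co-$(T_2)$; this non-edge is exactly a missing arc of the would-be shortcut, so semi-transitivity fails. The non-edges of co-$(T_2)$ are the crucial data here (for instance $1$ is non-adjacent to several of the outer/triangle vertices), and I would tabulate them up front so that each case closes by exhibiting one missing arc along an induced semi-cycle. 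Acyclicity is used twice: to get the transitive triangles, and to guarantee that the path plus back-arc we find really is a semi-cycle (no shorter directed cycle sneaks in).

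The hard part will be organizing the case analysis so it is genuinely exhaustive without exploding: the graph has enough edges that a naive enumeration over all $2^{|E|}$ orientations is hopeless, so the leverage must come from first using the transitive triangles to collapse most of the freedom (each triangle kills two thirds of its local orientations), and then observing that the remaining degrees of freedom are concentrated on a few edges incident to $1$ and to the outer pair $5,7$. A secondary subtlety is making sure that when I claim "this is a shortcut" the induced subgraph on the chosen semi-cycle really does miss the stated arc and no other arc is present that would make it transitive rather than a shortcut — but since a shortcut only requires the path, the long back-arc, and at least one missing chord, exhibiting a single missing chord always suffices, which keeps each case short. If the direct orientation argument proves too unwieldy, a fallback is to invoke Theorem~\ref{t1}'s proof technique in reverse: locate an induced subgraph of co-$(T_2)$ that is forced to be oriented with a shortcut because one of its vertices' neighborhoods, once a global acyclic orientation is fixed, cannot be laid out as required; but I expect the direct semi-transitivity argument to be cleaner.
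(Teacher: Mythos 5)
Your plan is not yet a proof: after reducing, via Theorem~\ref{thm:rep-equals-semi-trans}, to showing that co-$(T_2)$ admits no semi-transitive orientation, the entire mathematical content — the exhaustive case analysis showing that \emph{every} acyclic orientation contains a shortcut — is left as an expectation rather than carried out. Worse, the one concrete structural guess you make is false: the induced subgraph on $\{1,2,3,4,5,6\}$ cannot ``already force a shortcut,'' because it is a comparability graph, hence transitively (in particular semi-transitively) orientable. Indeed it is the comparability graph of the poset with chain $2<1<4<3$ together with $2<5$, $5<3$, $6<5$, $6<3$, $6<4$; one checks the comparabilities are exactly the edges of that subgraph. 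So any argument of the kind you propose must genuinely involve all three vertices $5,6,7$ (each attached to a different pair from $\{2,3\},\{3,4\},\{2,4\}$ inside the clique $\{1,2,3,4\}$); restricting to a six-vertex piece cannot work, and the remaining ``bring in $7$ as well'' step is exactly the unproved heart of the matter. An orientation-by-orientation check over the $15$ edges, even after using transitivity of triangles, is a sizable finite verification that you have not performed, and no specific shortcut is ever exhibited.

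For comparison, the paper does not pass through orientations at all. It argues directly on a $k$-uniform word $W$ representing the graph: since $1,2,3,4$ form a clique, their $i$-th occurrences appear in the same order in every block, and using the graph's automorphisms (which permute $\{2,3,4\}$ and correspondingly $\{5,6,7\}$) together with Proposition~\ref{pr1} one may assume this order is $1234$; then the vertex $7$ (adjacent to $2,4$ but not to $1,3$) must either lie inside every interval $[2^i,4^i]$, forcing $7$ to alternate with $1$, or outside all of them, forcing $7$ to alternate with $3$ — a contradiction either way. Note that the ``by symmetry'' step is doing real work there: it is precisely why the vertices $5$ and $6$ are needed (only the edges among $5,6,7$ are dispensable, as the paper's remark notes), which is the same structural point your subgraph reduction overlooks. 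If you want to salvage your route, you would need either to perform and organize the full orientation case analysis on all seven vertices, or to import an analogue of the symmetry reduction before doing so.
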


\begin{proof}
Assume that the graph in Fig.~\ref{co-T2} is $k$-representable for
some $k$ and $W$ is a word-representant for it. The vertices 1,2,3,4
form a clique; so, their appearances $1^i,2^i,3^i,4^i$ in $W$ must
be in the same order for each $i=1,2,\ldots,k$. By symmetry and
Proposition~\ref{pr1} we may assume that the order is 1234. Now let
$I_1$, $I_2,\ldots,I_k$ be the set of all $[2^i,4^i]$-intervals in
$W$.
Two cases
are possible.
\begin{itemize}
\item[1.] There is an interval $I_j$ such that 7 belongs to it. Then
since 2,4,7 form a clique, 7 must be inside each of the intervals
$I_1$, $I_2,\ldots,I_k$. But then 7 is adjacent to 1, a
contradiction.
\item[2.] 7 does not belong to any of the intervals $I_1$, $I_2,\ldots,I_k$.
Again, since 7 is adjacent to 2 and 4, each pair of consecutive
intervals $I_j$, $I_{j+1}$ must be separated by a single 7. But then
7 is adjacent to 3, a contradiction.
\end{itemize}
\end{proof}

\begin{remark}
Note that the existence of edges between
vertices 5,6,7 was not used in the proof of Theorem~\ref{anti-t1}. So, we
actually have four counter-examples to the converse of
Theorem~\ref{t1}.
\end{remark}

What about other non-representable graphs? Or, rather, which
important classes of graphs are not contained in the class of
representable graphs? We establish the following classes to be not
necessarily representable:
\begin{itemize}
 \item chordal (see the rightmost graph in Fig.~\ref{small}), and
thus perfect,
 \item line (second graph in Fig.~\ref{small}),
 \item co-trees (the graph co-$T_2$ earlier), and thus co-bipartite and co-comparability,
 \item 2-outerplanar (the first and last graphs in Fig.~\ref{small}), and thus
 planar,
  \item split (the last graph in Fig.~\ref{small}),
  \item 3-trees (the last graph in Fig.~\ref{small}), and thus partial 3-trees.
\end{itemize}
On the other hand, the 4-clique, $K_4$, is representable and it
belongs to all these classes.

\paragraph{\bf The effect of graph operations}

One may want to explore which operations on graphs preserve
representability (or non-representability). We pinpoint one such
operation, and list others that are orthogonal.

\begin{enumerate}
\item The following operation on a representable graph yields a
representable graph: Replace any node with a comparability graph,
connecting all the new nodes to the neighbors of the original node.
I.~e., replacing a node with a \emph{module} that is a comparability
graph.

\item A generalization of Proposition~\ref{pr1a} on identifying cliques of size more than 1 from two representable graphs
is false. Indeed, consider the rightmost graph in Fig.~\ref{small}
without a node of degree 2 connected to the end points of edge $e$
(denote this graph by $G$), and identify $e$ with an edge in a
triangle $T$ resulting in obtaining the rightmost graph in
Fig.~\ref{small}. Both $G$ and $T$ are representable, but gluing
them through an edge (a clique of size 2) results in a
non-representable graph.

\item The complement to a non-representable graph can be
permutationally representable: see, for example, the second graph
in~Fig.~\ref{small}.

 \item Not much can be said in general on the taking line graph
 operation. For example, the second non-representable graph
 in~Fig.~\ref{small} is obtained from $K_{2,3}$ together with an
 edge between the nodes of degree 3, which is representable. On the
 other hand, there are many easy constructible examples when
 representable graphs go to representable graphs by taking line graph
 operation.



\end{enumerate}


\section{Concluding Remarks and Open Questions}\label{open}

It is natural to ask about optimization problems on representable
graphs. Theorem~\ref{obs:3col} implies that many classical
optimization problems are NP-hard on representable graphs:

\begin{obs}
The optimization problems Independent Set, Dominating Set, Graph
Coloring, Clique Partition, Clique Covering are NP-hard on
representative graphs.
\end{obs}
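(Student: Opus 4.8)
The plan is to reduce from the corresponding problems on general graphs by exhibiting, for an arbitrary input graph $G$, a representable graph in which the optimal value of each problem is easy to recover from that of $G$. The key observation is that Theorem~\ref{obs:3col} gives us representability for free on a large subclass — but general graphs are not $3$-colorable, so we cannot simply invoke it. Instead, the idea is to use the subdivision construction of Theorem~\ref{t3}: the $3$-subdivision $G'$ of any graph $G$ (replace every edge by a path with three internal vertices) is $3$-representable, hence representable. So the reduction $G \mapsto G'$ stays inside the class of representable graphs.

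First I would handle Graph Coloring and Independent Set together, since subdivisions interact cleanly with both. For coloring, recall that a subdivision of an edge cannot decrease the chromatic number, and for any graph with at least one edge the $3$-subdivision has chromatic number exactly $\max(\chi(G),2)$; since $3$-coloring of general graphs is NP-hard and the $3$-subdivision preserves $3$-colorability versus non-$3$-colorability, Graph Coloring is NP-hard on representable graphs. For Independent Set, there is a standard formula: if $G$ has $n$ vertices and $m$ edges, then $\alpha(G') = \alpha(G) + m + (\text{correction from the internal path vertices})$ — more precisely each subdivided edge contributes a fixed number of new independent-set vertices independently of the rest, so $\alpha(G')$ determines $\alpha(G)$. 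Thus Independent Set is NP-hard on representable graphs, and by the usual complementation within a fixed framework one can also route Dominating Set through a similar gadgeted subdivision (domination is a bit more delicate under subdivision, so here I would instead use the all-adjacent-vertex or $3$-colorable-blow-up trick where available).

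Alternatively — and this is cleaner — I would prove the whole Observation at once using Theorem~\ref{obs:3col} directly by a padding argument: given an arbitrary graph $G$ on $n$ vertices, form $H$ by taking the disjoint union of $G$ with... no — that does not make $G$ $3$-colorable. The honest route is the subdivision one above for the four ``independence/coloring-flavoured'' problems, together with the observation that Clique Partition and Clique Covering on $G$ are Independent-Set/Coloring problems on the complement $\overline{G}$; but $\overline{G}$ need not be representable, so for those two I would instead note that Clique Covering (partition of $V$ into cliques) equals the chromatic number of the complement, and use that the $3$-subdivision of $\overline{G}$ is representable while its clique structure mirrors the independence structure of $\overline{G}$, i.e. the independence structure of $G$ lifted appropriately — again a fixed-offset bookkeeping computation.

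The main obstacle I anticipate is \emph{not} the hardness of the base problems (all classical) but verifying that the subdivision map transports the optimum by an \emph{efficiently computable, instance-independent formula} for each of the five problems simultaneously — Independent Set and Coloring are textbook, but Dominating Set, Clique Partition, and Clique Covering under $3$-subdivision each require a short separate lemma pinning down the exact offset, and for domination in particular one must check that the three internal vertices of each subdivided edge are dominated economically. Once those offsets are established, each reduction is polynomial-time and the NP-hardness transfers, completing the proof. I would present the Independent Set / Coloring cases in full and remark that the remaining three follow by the analogous fixed-offset analysis on the $3$-subdivision (of $G$, or of $\overline G$ where the complement is the natural home of the problem), which is the level of detail appropriate for a concluding-remarks observation.
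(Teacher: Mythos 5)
There is a genuine gap, and in fact the two central computations in your reduction are wrong. First, the coloring step: you claim that the $3$-subdivision $G'$ of $G$ has chromatic number $\max(\chi(G),2)$. It does not. Once every edge of $G$ is replaced by a path with internal vertices, the resulting graph is always $3$-colorable (color the original vertices with one color and $2$-color the internal path vertices), and with three internal vertices per edge every cycle of $G$ becomes a cycle of length divisible by $4$, so $G'$ is in fact bipartite. Hence $\chi(G')$ carries no information about $\chi(G)$, and no subdivision-based reduction can ever prove hardness of Graph Coloring, since all such graphs have chromatic number at most $3$. Second, the Independent Set step: the fixed-offset formula $\alpha(G')=\alpha(G)+m+\text{const}$ holds for \emph{even} subdivisions (subdividing an edge twice adds exactly $1$ to $\alpha$), but not for three internal vertices per edge: on a path $u\,a\,b\,c\,v$ the internal vertices contribute $1$ to a maximum independent set when at least one of $u,v$ is chosen and $2$ when neither is, so the offset is not instance-independent; indeed one gets $\alpha(G')=m+\max_{A\subseteq V}\bigl(|A|+e(V\setminus A)\bigr)$, which is $n+m$ plus a quantity unrelated to $\alpha(G)$. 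Finally, Dominating Set, Clique Partition and Clique Covering are only gestured at, and for the last two the natural home is the complement, which your construction does not keep representable.

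The paper's own argument is much simpler and is the one your opening paragraph talks yourself out of: Theorem~\ref{obs:3col} says \emph{every} $3$-colorable graph is representable, and these optimization problems are already known to be NP-hard when restricted to $3$-colorable instances (e.g.\ Independent Set on cubic or triangle-free graphs, Dominating Set on bipartite graphs, and similarly for the clique-partition-type problems on suitable $3$-colorable families). NP-hardness on a subclass of representable graphs is all that is needed; one does not have to reduce from arbitrary graphs, so the objection ``general graphs are not $3$-colorable, so we cannot simply invoke it'' misreads the intended use of the theorem. Your subdivision idea can be salvaged for Independent Set by using an even subdivision (two internal vertices per edge), which is $3$-colorable, hence representable, and satisfies $\alpha(G')=\alpha(G)+m$; but the Coloring claim in particular requires representable instances of unbounded chromatic number and cannot be obtained along the route you propose.
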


Note that it may be relevant whether the representation of the graph as a
semi-transitive digraph is given; solvability under these conditions is open.

However, some problems remain polynomially solvable:

\begin{obs}
The Clique problem is polynomially solvable on representation graphs.
\end{obs}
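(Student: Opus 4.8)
The plan is to reduce maximum clique to a longest-path computation inside a semi-transitive orientation, using the ``no shortcut'' condition. Given the input representable graph $G$, first fix a semi-transitive orientation $D$ of $G$; one exists by Theorem~\ref{thm:rep-equals-semi-trans}, and when $G$ is supplied by a word-representant $W$ such a $D$ is obtained in polynomial time by orienting $x$ towards $y$ whenever the first occurrence of $x$ precedes that of $y$ in $W$ (this orientation was shown to be semi-transitive in the forward direction of the proof of Theorem~\ref{thm:rep-equals-semi-trans}).

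The key observation is that, in a semi-transitive digraph $D$, a set of vertices $K$ induces a clique of $G_D$ if and only if its vertices admit a listing $v_1,v_2,\dots,v_k$ such that $v_1v_2\dots v_k$ is a directed path in $D$ and $v_1v_k$ is an arc. The ``only if'' direction holds because the orientation restricted to a clique is an acyclic tournament, hence a transitive tournament, so $K$ carries a linear order $v_1,\dots,v_k$ with all forward arcs present, in particular $v_1v_k\in E(D)$. The ``if'' direction is exactly the no-shortcut condition: if $v_1\dots v_k$ is a directed path, $v_1v_k$ is an arc, $k\ge 3$, and some chord $v_iv_j$ with $i<j$ were absent, then the subdigraph induced on $\{v_1,\dots,v_k\}$ would be a shortcut (it contains the path $v_1\dots v_k$, the arc $v_1v_k$, and the non-adjacent pair $v_i,v_j$), contradicting semi-transitivity; hence all chords are present and $K$ is a clique. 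For $k\le 2$ there is nothing to check.

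It follows that the clique number $\omega(G)$ equals $\max\bigl(\{1\}\cup\{\,\ell(u,w) : uw\in E(D)\,\}\bigr)$, where $\ell(u,w)$ is the number of vertices on a longest directed path from $u$ to $w$ in $D$, and any path attaining this maximum is itself a maximum clique of $G$. The algorithm is then straightforward: compute a topological order of $D$; for each of the $O(n^2)$ arcs $uw$ compute $\ell(u,w)$, together with a witnessing path, by the standard dynamic program for longest paths in a DAG (scanning vertices in reverse topological order); and return the best path found. The whole procedure is polynomial.

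There is essentially no serious obstacle here: the only step needing any care is the equivalence in the second paragraph, and it reduces immediately to unwinding the definition of a shortcut, while converting a word-representant into the orientation and running the DAG longest-path routine are entirely routine. We only remark that this argument takes the orientation $D$ as part of the input (or derives it from a given word-representant $W$); producing a semi-transitive orientation of an abstract graph is the recognition problem for representable graphs, which is so far known only to lie in NP (Corollary~\ref{cor:inNP}).
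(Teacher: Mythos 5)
Your combinatorial core is correct: in a semi-transitive digraph a vertex set is a clique of the underlying graph exactly when it can be listed as a directed path whose first vertex is joined by an arc to its last (an acyclic tournament is transitive, and the no-shortcut condition supplies the missing chords), and the resulting reduction of maximum clique to longest $u$--$w$ paths over arcs $uw$ of a DAG is a valid polynomial-time routine. The gap is in the very first step: the statement concerns representable graphs given as graphs, while your algorithm needs a semi-transitive orientation $D$ (or a word-representant $W$) as part of the input. No polynomial-time procedure is known for producing such an orientation from an abstract representable graph --- that is essentially the recognition problem, which is only known to lie in NP (Corollary~\ref{cor:inNP}) --- and you concede this yourself in your closing remark. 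So what you have actually proved is the weaker assertion that Clique is polynomial when a semi-transitive orientation (or a representing word) is supplied, not the observation as stated.

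The paper avoids this obstacle by working with the undirected graph alone: by Theorem~\ref{t1} and Lemma~\ref{l1}, the neighborhood $N(x)$ of every vertex $x$ of a representable graph induces a comparability graph; maximum clique is polynomially solvable on comparability graphs (a transitive orientation is computable in polynomial time, and a maximum clique is then a longest chain); and every clique of size at least two is contained in $\{x\}\cup N(x)$ for any of its vertices $x$, so it suffices to return the best value of $1+\omega(G[N(x)])$ over all $x$. To salvage your route you would either have to add the orientation (or word) to the input in the statement, or fall back on this neighborhood argument; as written, your proposal does not establish polynomial solvability on representable graphs per se.
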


Indeed, we can simply use the fact
that the neighborhood of any node is a comparability graph. The
clique problem is easily solvable on comparability graphs. Thus, it
suffices to search for the largest clique within all induced
neighborhoods.

\smallskip

There are still many questions one can ask on representable graphs,
some of which are stated below.

\begin{enumerate}
\item Is it NP-hard to decide whether a graph is representable?
\item What is a tighter upper bound on the representation
number of a graph, in terms of $n$? We know that it lies between $n/2$
and $n$.

















\item Can one characterize the forbidden subgraphs of representative graphs? This problem seems to be hard since even for 2-representable (i.~e. circle) graphs such a characterization is unknown.

\end{enumerate}


\begin{thebibliography}{5}
\bibitem{classes} A. Brandst\"adt, V. Bang Lee, J. P. Spinrad.
\emph{Graph Classes: A Survey.}
Monographs on Discrete Mathematics and Applications.
SIAM, 1987.

\bibitem{C} B. Courcelle: Circle  graphs  and  Monadic Second-order
logic, {\em Journal of Applied Logic}, to appear.
\bibitem{HJ} R. Hegde and K. Jain: The hardness of approximating
poset dimension, {\em Electronic Notes in Discrete Mathematics} {\bf
29} (2007), 435--443.
\bibitem{KP} S. Kitaev, A. Pyatkin: On representable graphs, {\em
Automata, Languages and Combinatorics} {\bf 13} (2008) 1, 45--54.
\bibitem{KS} S. Kitaev and S. Seif: Word problem of the Perkins semigroup via directed
acyclic graphs, {\em Order}, DOI 10.1007/s11083-008-9083-7 (2008).
\bibitem{KL} A. Konovalov, S. Linton:
Search of representable graphs with constraint solvers, {\em
University of St Andrews}, CIRCA technical report 2008/7.
\bibitem{LINK}  http://wwwteo.informatik.uni-rostock.de/isgci/smallgraphs.html
\bibitem{Yann} M. Yannakakis: The complexity of the partial order dimension
problem, \emph{SIAM J. Algebraic Discrete Methods} 3(3) (1982), pp. 351--358.

\end{thebibliography}
\end{document}